\DeclareMathOperator{\supp}{supp}
\newtheorem{theorem}{Theorem}[section]
\newtheorem{theorem*}{Theorem}
\newtheorem{corollary}[theorem]{Corollary}
\newtheorem{lemma}[theorem]{Lemma}
\newtheorem{question}[theorem]{Question}
\theoremstyle{definition}
\newtheorem{definition}[theorem]{Definition}
\newtheorem{example}[theorem]{Example}
\newtheorem{remark}[theorem]{Remark}
\numberwithin{equation}{section}
\numberwithin{figure}{section}
\title[Multiplicity structure of invariant measures]{Multiplicity structure of preimages of invariant measures under finite-to-one factor maps}
\author{Jisang Yoo}
\address{Seoul National University, Seoul, South Korea}
\email{jisangy@kaist.ac.kr}
\keywords{degree, factor code, SFT, sofic subshfit, factor map, finite-to-one, invariant measure}
\subjclass[2010]{Primary 37B10; Secondary 37A99, 37B15}
\thanks{This research was supported by BK21 PLUS SNU Mathematical Sciences Division. The author thanks Uijin Jung, Sujin Shin, Soonjo Hong and referees for helpful comments. This research was supported by Basic Science Research Program through the National Research Foundation of Korea(NRF) funded by the Ministry of Education(2012R1A6A3A01040839) and the National Research Foundation of Korea (NRF) grant funded by the MEST 2015R1A3A2031159.}
\begin{document}
\maketitle
\begin{abstract}
Given a finite-to-one factor map $\pi: (X, T) \to (Y, S)$ between topological dynamical systems, we look into the pushforward map $\pi_*: M(X, T) \to M(Y,T)$ between sets of invariant measures. We investigate the structure of the measure fiber $\pi_*^{-1}(\nu)$ for an arbitrary ergodic measure $\nu$ on the factor system $Y$.
We define the degree $d_{\pi,\nu}$ of the factor map $\pi$ relative to $\nu$ and the multiplicity of each ergodic measure $\mu$ on $X$ that projects to $\nu$, and show that the number of ergodic pre-images of $\nu$ is  $d_{\pi,\nu}$ counting multiplicity. In other words, the degree $d_{\pi,\nu}$ is the sum of the multiplicity of $\mu$ where $\mu$ runs over the ergodic measures in the measure fiber $\pi^{-1}_*(\nu)$.
This generalizes the following folklore result in symbolic dynamics for lifting fully supported invariant measures: Given a finite-to-one factor code $\pi: X \to Y$ between irreducible sofic shifts and an ergodic measure $\nu$ on $Y$ with full support, $\pi^{-1}_*(\nu)$ has at most $d_\pi$ ergodic measures in it, where $d_\pi$ is the degree of $\pi$.
We apply our theory of structure of measure fibers to the special case of symbolic dynamical systems. In this case, we demonstrate that one can list all (finitely many) ergodic measures in the measure fiber $\pi^{-1}_*(\nu)$.
\end{abstract}

\section{Introduction}
Under some reasonable assumptions, a classical dichotomy result on factor maps $\pi:X \to Y$ between symbolic dynamical systems (i.e. factor codes) classifies them into two categories (see Theorem~\ref{thm:fto-conditions} for a precise statement). The first category consists of finite-to-one factor codes where typical fibers have finite cardinality and the factor system $Y$ and the extension $X$ have the same entropy. The other category consists of infinite-to-one factor codes where typical fibers have infinite cardinality and the factor system $Y$ has lower entropy than $X$.

Finite-to-one factor codes are more well understood than infinite-to-one codes.
For a finite-to-one factor code $\pi$, one can associate a single number $d= d_\pi$ called the degree of $\pi$ such that the map $\pi$ is almost $d$-to-one in some sense. This is a topological analogue to a result in ergodic theory that a finite-to-one factor map between two ergodic systems is a.e. constant-to-one i.e. mod 0 isomorphic to a constant-to-one map. On the other hand, we do not have an analogue of a stronger result like Rohlin's skew-product theorem that any factor map between two ergodic systems is isomorphic to that from a skew-product (Theorem 3.18 in \cite{glasner2003ergodic}). Finite-to-one factor codes in general cannot be represented as any kind of topological skew-product.

Given a factor map $\pi: X \to Y$ between irreducible sofic shifts or subshifts of finite type, we have an induced onto map $\pi_*$ from the set of invariant probability measures on $X$ to that of $Y$.
We are interested in the structure of the measure fiber $\pi_*^{-1}(\nu)$ where $\nu$ is a fixed ergodic measure on $Y$.
A classical folklore result says that if $\pi$ is finite-to-one and $\nu$ has full support, then the number of ergodic measures in its measure fiber is bounded by the degree $d$. If we relax the full support condition of $\nu$, the number may exceed $d$ but it is still finite (see Example~\ref{ex:exceed}).
Since the ergodic measures in $\pi_*^{-1}(\nu)$ are precisely its extreme points, the measure fiber is a simplex with finitely many extreme points. Since a simplex is determined by its extreme points, knowing $\pi_*^{-1}(\nu)$ is the same as knowing all ergodic measures that project to $\nu$.

A surprising result along this line concerns the special case when $\nu$ is a (fully supported) Markov measure (which forces $Y$ to be a subshift of finite type rather than a strictly sofic shift), or more generally when $\nu$ is the unique equilibrium state (or equivalently, invariant Gibbs measure) of some regular potential function defined on a mixing subshift of finite type $Y$.
In \cite{Tuncel1981conditional}, Tuncel proved the following theorem: if $\nu$ is as described, then it lifts uniquely through the finite-to-one factor code $\pi$. In other words, there is only one measure $\mu$ in $\pi_*^{-1}(\nu)$, even when $d > 1$. The unique lift $\mu$ is easily described from $\nu$ and $\pi$. In particular, if $\nu$ is Markov, so is $\mu$.
This case is strictly a special case: Every such equilibrium state is ergodic and fully supported, but not every fully supported ergodic measure on $Y$ is an equilibrium state of a regular potential function. In fact, such equilibrium states come with very strong mixing properties that are typically not shared by arbitrary fully supported ergodic measures \cite{R-ThermoFormal}.

Tuncel's result can be thought of as a generalization of an earlier and more easily proved result that finite-to-one factor codes preserve maximal measures (i.e. measures of maximal entropy). That is, $\pi_* \mu_0 = \nu_0$ where $\mu_0, \nu_0$ are the unique maximal measures on $X, Y$ respectively.

In this paper, we develop a theory of the structure of $\pi_*^{-1}(\nu)$ for the general case when $\nu$ is an arbitrary ergodic measure.
Even when we are mainly interested in Markov measures, the general case can be of interest for the following reason. When we are given a Markov measure $\mu$ on $X$ that we want to investigate, we might find a convenient finite-to-one factor code $\pi$ on $X$ to exploit, but the image measure $\nu := \pi_* \mu$ does not have to be a Markov measure. In fact, the sofic measure (the image of a Markov measure, a.k.a. stationary hidden Markov chain) in general may not even be an equilibrium state of any regular potential, let alone a Markov measure. (See \cite{Boyle-Petersen-2009-hidden-markov-symbolic} for examples and introduction.)

Finite-to-one factor codes are also relevant for studying the evolution of measures under surjective cellular automata. When $X=Y={\mathcal A}^{\mathbb Z}$ for some alphabet $\mathcal A$, any factor code between $X, Y$ is a 1-dimensional surjective cellular automaton and $\pi$ is finite-to-one because $X$ and $Y$ have the same entropy.

For infinite-to-one factor codes, there are usually infinitely many ergodic measures in $\pi_*^{-1}(\nu)$. In this case, people are more interested in the finitely many entropy-maximizing ergodic measures within $\pi_*^{-1}(\nu)$, which are called measures of maximal relative entropy \cite{all2013classdegrelmaxent, PQS-MaxRelEnt}. The problem of lifting invariant measures through finite-to-one factor codes has a close connection with the problem of lifting through infinite-to-one factor codes. Understanding the former can help understanding the latter because of the following two reduction results. One is that in each infinite-to-one factor code $\pi:X \to Y$, under some reasonable assumptions, one can always find a subshift $X_1 \subset X$ on which the induced factor code $\pi':X_1 \to Y$ is finite-to-one \cite{MPW1984transmission}. The other is that the infinite-to-one factor code $\pi:X \to Y$ can be decomposed into the composition of two factor codes $\pi_1: X \to M$ and $\pi_2: M \to Y$ where $\pi_2$ is finite-to-one and $\pi_1$ is a class degree one factor code (this is to be published in a subsequent paper). We hope that the theory of finite-to-one measure fiber structure can shed new light on the study of measures of maximal relative entropy and relative thermodynamic formalism in general.

As part of the structure of the measure fiber $\pi_*^{-1}(\nu)$, for each ergodic measure $\mu$ in it, we define the multiplicity of $\mu$ over its image $\nu = \pi_* \mu$. A simple example to motivate the notion of multiplicity is the following from \cite{Walters-Relative}.
\begin{example}\label{first-example}
  Let $X = Y = \{0,1\}^{\mathbb Z}$ be two copies of the full two shift. Define $\pi: X\to Y$ by $\pi(x) = y$ where $y_i = x_i + x_{i+1} \pmod 2$ for $i \in \mathbb Z$. If this map is seen as an endomorphism of the full shift rather than as a factor code, then this is nothing but the rule 102 cellular automaton (which induces Ledrappier's three dot example). The factor code $\pi$ is 2-to-1. For each $0 < p < 1$, define $\mu_p$ to be the Bernoulli product measure on $X$ with probability $p$ for value $1$ and $1-p$ for $0$. Let $\mu'_{p} = \mu_{1-p}$. Then $\mu_p$ and $\mu'_p$ project to a common measure $\nu_p = \pi(\mu_p) = \pi(\mu'_p)$ on $Y$. The two measures $\mu_p, \mu'_p$ are distinct unless $p = \frac12$.
  After we define multiplicity in a later section, we will see that the measure $\mu_{\frac12} = \mu'_{\frac12}$ is of multiplicity
  two w.r.t. $\pi$ over $\nu_{\frac12}$. And we will see that for $p\ne \frac12$, the measures $\mu_p, \mu'_p$ have multiplicity one over $\nu_p$.
\end{example}

We also introduce the notion of degree joining which is an essentially unique object obtained by joining all ergodic lifts $\mu$ of $\nu$ counted with multiplicity. In order to take shortcuts by relying on the theory of joinings, we define and construct the degree joining before we define multiplicity. Then we build a general theory of multiplicity by relying on the constructed degree joining.

By exploiting the multiplicity structure of finite-to-one measure fibers, we are able to demonstrate that in many cases, given an ergodic measure $\mu$ on $X$, as soon as one knows a concrete way to list all points in the fiber $\pi^{-1}(\pi(x))$ from a given point $x \in X$, one also has a way to list all ergodic measures in $\pi^{-1}(\pi(\mu))$ and count the number of them.
In particular, we can build an example of a 5-to-1 factor code such that in a broad class of (fully supported) $\nu$, the number of ergodic measures in $\pi^{-1}(\nu)$ is strictly between 1 and the degree 5. Previously there has been no tools to establish such examples.

Next sections are organized as follows. In Section~\ref{sec:back}, we fix notations and elementary definitions. In Section~\ref{sec:deg}, we define measure theoretical degree and canonical lift. In Section~\ref{sec:deg-join}, degree joinings are introduced. In Section~\ref{sec:mult}, the multiplicity of ergodic measures over finite-to-one factor maps is defined. In Section~\ref{sec:ex}, we demonstrate examples of calculating measure fibers using degree joinings in case of cellular automata. In Section~\ref{sec:deg-symb}, the special case of symbolic dynamics is further investigated and some irregular examples involving measures without full support are mentioned.
\section{Background}\label{sec:back}

Unless stated otherwise, a \emph{topological dynamical system} (TDS for short) here means a compact metric space $X$ equipped with a self homeomorphism $T: X \to X$ on it, and a \emph{shift of finite type} (SFT) means a (one-dimensional) two-sided shift of finite type with a finite alphabet. In particular, we only deal with invertible systems. Shift spaces and sofic shifts are also assumed to be two-sided and with a finite alphabet. \emph{Irreducible} shift spaces mean shift spaces that are forward transitive. We remark that converting between invariant measures on one-sided shift spaces and those on two-sided shift spaces is straightforward, and therefore the two-sided condition is a minor technical assumption.

For a point $x = (x_i)_{i\in \mathbb Z}$ in a shift space $X$ and indices $i \le j$, we denote by $x_{[i,j]}$ the word $x_i x_{i+1} x_{i+2} \cdots x_j$. The shift map $\sigma_X: X \to X$ is defined by
\[ y=\sigma_X(x) \iff y_i = x_{i+1} \quad (\forall i \in \mathbb Z). \]
and will be denoted by $\sigma$ without the subscript if there is no confusion.

The topological entropy of a topological dynamical system $(X, T)$ is denoted by $h(X,T)$, or just $h(X)$ if $T$ is understood.
A subset $X_0 \subset X$ is said to be a \emph{subsystem} of $(X, T)$ if it is non-empty, closed and $T$-invariant (i.e., $TX_0 = X_0$). A subsystem is said to be \emph{proper} if it is a proper subset of the ambient space $X$.

When we say $\mu$ is a \emph{measure} on $X$, we mean that $\mu$ is a Borel probability measure on it. We denote by $M(X)$ the set of all measures on $X$. $M(X)$ is a compact metric space under the weak star topology.
If $X_0 \subset X$ is measurable and $\mu(X_0)=1$, then $\mu$ is said to be \emph{supported} on $X_0$. We may identify measures on $X$ that are supported on $X_0$ with measures on $X_0$.
For $\mu \in M(X)$, $\supp(\mu) \subset X$ denotes the topological support of the measure $\mu$, i.e., the smallest closed set of full measure w.r.t. $\mu$. The topological support of any invariant measure on $X$ is a subsystem of $X$.
Given a topological dynamical system $(X, T)$, the set of all ergodic measures on it will be denoted by $E(X, T)$ or just $E(X)$ if the action $T$ is understood. If the topological support of $\mu\in E(X, T)$ is $X$, we say $\mu$ is \emph{fully supported} or has \emph{full support}.
For $\mu \in E(X, T)$, a point $x \in X$ is called a $\mu$-\emph{generic} point if the forward averages $\frac1N \sum_{n=1}^{N} T^n \delta_x$ converge to $\mu$, where $\delta_x$ denotes the point mass at $x$ (see \cite{glasner2003ergodic}).

Each $\mu\in E(X)$ gives rise to an (abstract) measure preserving system $(X, T, \mu)$ as in ergodic theory after forgetting the topology on $X$ but keeping the Borel sigma-algebra. Ergodic measure preserving systems will be called \emph{ergodic systems} for short.

A \emph{factor map} is a continuous onto map between two topological dynamical systems that commutes with the associated homeomorphisms.
A \emph{factor code} is a factor map between two shift spaces. When we say $\pi: X\to Y$ is a factor code on an SFT $X$, it is therefore assumed that $Y$ is the image of $X$ under $\pi$ (and hence $Y$ is a sofic shift space).
Given a factor map $\pi: (X, T) \to (Y,S)$ between topological dynamical systems, we will say a measure $\mu$ on $X$ is a \emph{lift} or \emph{preimage} of a measure $\nu$ on $Y$ if the (pushforward) image of $\mu$ under $\pi$ is $\nu$, i.e., if $\pi_*\mu = \nu$. For brevity, the pushforward map $\pi_*: M(X) \to M(Y)$ is denoted by $\pi$ when there is no confusion. In other words, we write $\pi\mu$ for $\pi_* \mu = \mu \circ \pi^{-1}$.

For more background on ergodic theory and theory of joinings, see \cite{glasner2003ergodic}. For background on factor codes for symbolic dynamics, see \cite{LM}.

\section{Measure-theoretical degree and canonical lift}\label{sec:deg}
In this section, we define the notion of degree over an arbitrary ergodic measure on a factor system. This extends the classical notion of degree of finite-to-one factor codes.
For this, we need the following lemma.

\begin{lemma}
  Let $(X,T)$ and $(Y,S)$ be topological dynamical systems and $\pi: X\to Y$ a factor map. Then the map $F: Y \to \{1,2,\dots\} \cup \{\infty\}$ defined by $y \mapsto |\pi^{-1}(y)|$ is constant a.e. with respect to each ergodic measure $\nu$ on $Y$. (The constant may depend on $\nu$.)
\end{lemma}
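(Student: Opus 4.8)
The plan is to observe that $F$ is an invariant function for $S$ and then reduce the statement to the measurability of $F$, after which ergodicity finishes the argument. First I would check invariance: since $\pi$ is a factor map we have $\pi \circ T = S \circ \pi$, and because $T$ is a homeomorphism it restricts to a bijection $T^{-1}: \pi^{-1}(Sy) \to \pi^{-1}(y)$ for every $y \in Y$. Hence $|\pi^{-1}(Sy)| = |\pi^{-1}(y)|$, that is, $F \circ S = F$ on all of $Y$. Once $F$ is known to be $\nu$-measurable, the sets $\{F \le c\}$ are $S$-invariant and $\nu$-measurable, so by ergodicity each has measure $0$ or $1$; taking $c_0 = \inf\{c : \nu(\{F \le c\}) = 1\}$ then forces $F = c_0$ almost everywhere. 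This last step is routine, so the entire weight of the proof rests on establishing measurability.

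The hard part will be the measurability of $F$, since the fiber-cardinality function of a continuous map is in general neither upper nor lower semicontinuous, and $\pi$ need not be open. The tool I would use is descriptive set theory. For each $n \ge 1$ consider the set
\[
  E_n = \{(x_1, \dots, x_n) \in X^n : x_i \ne x_j \ (i \ne j), \ \pi(x_1) = \dots = \pi(x_n)\},
\]
which is Borel in $X^n$, being the intersection of the open conditions $x_i \ne x_j$ with the closed conditions $\pi(x_i) = \pi(x_j)$. The image of $E_n$ under the continuous map $(x_1, \dots, x_n) \mapsto \pi(x_1)$ is precisely $\{y : F(y) \ge n\}$. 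As the continuous image of a Borel subset of a Polish space, this set is analytic, hence universally measurable by the classical theorem that analytic sets are measurable with respect to every Borel measure; in particular it is measurable for the completion of the Borel $\sigma$-algebra under $\nu$. Intersecting and differencing these level sets shows that each $F^{-1}(\{n\})$ and $F^{-1}(\{\infty\}) = \bigcap_n \{F \ge n\}$ is $\nu$-measurable, so $F$ is a $\nu$-measurable function.

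With measurability in hand the proof concludes as sketched above: $F$ is an $S$-invariant, $\nu$-measurable function on an ergodic system, hence constant $\nu$-almost everywhere, with the constant depending on $\nu$. I would note that the compactness and metrizability hypotheses on $X$ and $Y$ enter precisely to guarantee that they are Polish spaces, which is exactly what licenses the analytic-set argument; no further structure (such as the symbolic or finite-to-one assumptions used later) is needed for this lemma.
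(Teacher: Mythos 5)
Your proof is correct and takes essentially the same approach as the paper: both establish that the superlevel sets $\{y \in Y : |\pi^{-1}(y)| \ge n\}$ are analytic, hence universally measurable, and then conclude from $S$-invariance and ergodicity. The only cosmetic differences are that you realize these sets as images of Borel subsets of $X^n$ under $(x_1,\dots,x_n) \mapsto \pi(x_1)$ while the paper projects Borel subsets of $X^n \times Y$ to $Y$, and that you spell out the invariance $F \circ S = F$ via the bijection induced by $T^{-1}$ (making explicit where invertibility enters, a point the paper defers to the remark after its proof).
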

\begin{proof}
  We do not know if $F$ is Borel-measurable, but we can show that it is universally measurable, i.e., $F$ is measurable w.r.t. every measure on $Y$.
  Recall that a subset of a Polish space is said to be an analytic set if it is the image of a Borel subset of another Polish space under a Borel-measurable map, and that any analytic subset of a Polish space is universally measurable. See \cite{glasner2003ergodic} p.~52 or \cite{kechris2012classical} p.~155 for these facts.

  For each $k \in \mathbb N$, the superlevel set $ \{y\in Y: |\pi^{-1}y| \ge k\}$ is the projection to $Y$ of a Borel subset in $X^k \times Y$, namely, the subset consisting of all $(x_1,x_2,\dots,x_k,y) \in X^k \times Y$ for which $\pi(x_i)=y$ for all $1\le i \le k$ and $x_i \neq x_j$ for all $1\le i < j \le k$. Therefore the superlevel set is an analytic subset of $Y$, and hence a universally measurable set. It follows that the map $F$ is universally measurable.

  Recall that a measurable function defined on an ergodic system is a.e. constant if the function is invariant (w.r.t. the ergodic action).
  Since the map $F$ is invariant with respect to the action $S$, it must be constant a.e. with respect to each ergodic measure on $Y$.
\end{proof}
Recall that we are assuming invertibility. The lemma fails in general for non-invertible systems because $F$ is not $S$-invariant in such cases. (A non-invertible counter-example is with $X=Y$ being the one-sided golden mean shift and $\pi = \sigma_X$.)

For each $\nu \in E(Y)$, we define the \emph{degree} of $\nu$ relative to $\pi$ to be the unique number $d \in \{1,2,\dots\} \cup \{\infty\}$ such that for $\nu$-a.e. $y \in Y$, there are precisely $d$ points in the fiber $\pi^{-1}(y)$. We will denote this number by $d_{\pi,\nu}$, and if $\pi$ is understood, by $d_\nu$.

If a factor map $\pi: X \to Y$ has the property that $d_{\pi,\nu} = d_{\pi,\nu'}$ whenever $\nu$ and $\nu'$ are \emph{fully supported} ergodic measures on $Y$, then it makes sense to define the \emph{degree of the factor map} to be the common value $d_{\pi,\nu}$ and denote it by $d_\pi$. This measure-theoretical definition generalizes the classical definition (see Section~\ref{sec:deg-symb}) of degree of finite-to-one factor codes on irreducible SFTs and sofic shifts: recall that if $\pi: X \to Y$ is a finite-to-one factor code on an irreducible sofic shift, then its degree is defined to be the unique number $d \in \mathbb N$ such that $|\pi^{-1}(y)| = d$ for all doubly transitive points $y \in Y$ (points $y$ whose forward orbits and backward orbits are dense). The two definitions of degree are consistent because the set of doubly transitive points in $Y$ has full measure with respect to each fully supported ergodic $\nu$ on $Y$ (Lemma~\ref{lem:dt-full-in-full}). 
Even when a factor code has a finite degree, the degree of an arbitrary (not necessarily fully supported) ergodic measure on $Y$ may be different (see Example~\ref{ex:diff}).

We say $(X, Y, \pi, \nu)$ is a \emph{factor quadruple} if $\pi: X \to Y$ is a factor map between two topological dynamical systems and $\nu \in E(Y)$. Note that a factor quadruple always has degree, namely $d_{\pi,\nu}$, whether finite or infinite. If the degree $d$ is finite, it makes sense to say the factor quadruple is $d$-to-one almost everywhere.

Next, we introduce the notion of canonical lift of an ergodic measure under an a.e. finite-to-one factor map.

\begin{lemma}\label{lem:fiber-is-measurable}
  Let $\pi: X \to Y$ be a Borel-measurable map between Polish spaces. Let $A\subset X$ be a Borel subset. Then the map $F_A: Y \to \{0,1,2,\dots\} \cup \{\infty\}$ defined by $$y \mapsto |\pi^{-1}(y) \cap A|$$ is universally measurable.
\end{lemma}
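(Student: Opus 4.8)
The plan is to mimic the argument of the first lemma of this section, now allowing $\pi$ to be merely Borel and restricting the count to the Borel set $A$. First I would fix $k \in \mathbb{N}$ and exhibit the superlevel set $\{y \in Y : |\pi^{-1}(y) \cap A| \ge k\}$ as the projection to $Y$ of an explicit subset of $A^k \times Y$. Concretely, consider
$$B_k = \{(x_1,\dots,x_k,y) \in A^k \times Y : \pi(x_i) = y \text{ for all } i,\ x_i \ne x_j \text{ for } 1 \le i < j \le k\}.$$
A point $y$ lies in the superlevel set exactly when its fiber $\pi^{-1}(y)$ contains $k$ distinct points of $A$, i.e. exactly when $(x_1,\dots,x_k,y) \in B_k$ for some choice of the $x_i$; hence the superlevel set is the image of $B_k$ under the coordinate projection onto $Y$.

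The next step is to verify that $B_k$ is Borel. Since $A$ is Borel in the Polish space $X$, the power $A^k$ is Borel in $X^k$, and $X^k \times Y$ is again Polish. Each condition $\pi(x_i) = y$ is Borel: it is the pullback of the diagonal of $Y \times Y$ (closed, hence Borel, since $Y$ is metric) under the Borel map $(x_i,y) \mapsto (\pi(x_i),y)$. Each condition $x_i \ne x_j$ is the complement of the diagonal of $X \times X$, hence open and Borel. As a finite intersection of Borel conditions, $B_k$ is Borel.

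Then I would invoke the standard facts (the same ones cited for the first lemma) that the projection to $Y$ of a Borel subset of a product of Polish spaces is analytic, and that every analytic subset of a Polish space is universally measurable. This makes each superlevel set $\{y : F_A(y) \ge k\}$ universally measurable. Finally, since the codomain $\{0,1,2,\dots\}\cup\{\infty\}$ is countable, it suffices to check that each fiber of $F_A$ is universally measurable, which follows from
$$\{y : F_A(y) = k\} = \{y : F_A(y) \ge k\} \setminus \{y : F_A(y) \ge k+1\}, \qquad \{y : F_A(y) = \infty\} = \bigcap_{k \ge 1}\{y : F_A(y) \ge k\};$$
thus $F_A$ is universally measurable.

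The only real obstacle is the measurability bookkeeping in the second step: confirming that replacing the continuity of $\pi$ (available in the first lemma) by mere Borel-measurability does not break the argument. This is precisely where the hypothesis that $\pi$ is Borel is used, together with the fact that diagonals of metric spaces are Borel; once $B_k$ is seen to be Borel, the remainder is the identical analytic-set projection argument as before.
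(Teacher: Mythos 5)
Your proposal is correct and follows essentially the same route as the paper's own proof: express each superlevel set $\{y : F_A(y) \ge k\}$ as the projection of a Borel subset of $X^k \times Y$ (you use $A^k \times Y$, which is the same thing as the paper's condition $x_i \in A$), then invoke the facts that such projections are analytic and analytic sets are universally measurable. Your extra bookkeeping -- checking that Borel-measurability of $\pi$ suffices to make the defining conditions Borel, and passing from superlevel sets to level sets via the $\sigma$-algebra of universally measurable sets -- only fills in steps the paper leaves implicit.
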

\begin{proof}
  For each $k \in \mathbb N$, the set $$ \{y\in Y: |\pi^{-1}y \cap A| \ge k\}$$ is the projection of a Borel subset in $X^k \times Y$, namely, the subset consisting of all $(x_1,x_2,\dots,x_k,y) \in X^k \times Y$ for which $\pi(x_i)=y$ and $x_i \in A$ for all $1\le i \le k$ and $x_i \neq x_j$ for all $1\le i < j \le k$, and therefore this set is an analytic subset of $Y$, and hence a universally measurable set. It follows that the map $F_A$ is universally measurable.
\end{proof}

\begin{theorem}\label{thm:canonical-lift-exists}
  Let $(X, Y, \pi, \nu)$ be a factor quadruple with finite degree $d := d_{\pi,\nu} < \infty$.
  Then there is a (not necessarily ergodic) invariant measure $\mu$ on $X$ such that $\pi\mu=\nu$ and that the disintegration $\{\mu_y\}_{y \in Y}$ of $\mu$ over $Y$ has the property that $\mu_y$ is the uniform distribution on the $d$-points subset $\pi^{-1}(y) \subset X$ for $\nu$-a.e. $y \in Y$. Such $\mu$ is unique and we will call it the \emph{canonical lift} of $\nu$ and denote it by $\ell_\pi(\nu)$.
\end{theorem}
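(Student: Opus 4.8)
The plan is to construct $\mu$ explicitly as an integral of fiber-uniform measures against $\nu$, and then to read off every required property directly from the construction. Let $Y_0 := \{y \in Y : |\pi^{-1}(y)| = d\}$, which has full $\nu$-measure by the definition of degree. For $y \in Y_0$ and a Borel set $A \subseteq X$ I would set $\mu_y(A) := \tfrac1d\,|\pi^{-1}(y) \cap A|$, so that $\mu_y$ is the uniform distribution on the $d$-point fiber $\pi^{-1}(y)$; for $y \notin Y_0$ the choice of $\mu_y$ is irrelevant and may be fixed arbitrarily. The crucial point is that $y \mapsto \mu_y(A)$ is $\nu$-measurable for each fixed Borel $A$, which is exactly Lemma~\ref{lem:fiber-is-measurable} applied to the set $A$. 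Hence $\{\mu_y\}$ is a measurable family of probability measures, and I can define the candidate lift by
\[ \mu(A) := \int_Y \mu_y(A)\, d\nu(y), \]
which is countably additive by monotone convergence and satisfies $\mu(X) = 1$, so $\mu \in M(X)$.

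Next I would verify the two defining relations, both of which reduce to equivariance of fibers. Since $\pi T = S\pi$ and $T$ is a bijection, one has $T(\pi^{-1}(y)) = \pi^{-1}(Sy)$ for every $y$. For the projection, for a Borel $B \subseteq Y$ and $y \in Y_0$ the intersection $\pi^{-1}(y) \cap \pi^{-1}(B)$ equals $\pi^{-1}(y)$ when $y \in B$ and is empty otherwise, so $\mu_y(\pi^{-1}B) = \mathbbm{1}_B(y)$, and integrating gives $\pi\mu(B) = \nu(B)$. For invariance, applying the bijection $T$ inside the counting measure yields $\mu_y(T^{-1}A) = \tfrac1d\,|\pi^{-1}(Sy) \cap A| = \mu_{Sy}(A)$ for $\nu$-a.e. $y$ (namely those with $y, Sy \in Y_0$, a full-measure set since $S\nu = \nu$), and then the change-of-variables formula together with $S\nu = \nu$ gives
\[ T\mu(A) = \int_Y \mu_{Sy}(A)\, d\nu(y) = \int_Y \mu_{y}(A)\, d(S\nu)(y) = \mu(A), \]
so $\mu$ is $T$-invariant.

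Finally come the disintegration statement and uniqueness. The same fiber computation shows that $\mu(A \cap \pi^{-1}B) = \int_B \mu_y(A)\, d\nu(y)$ for all Borel $A \subseteq X$ and $B \subseteq Y$, which is precisely the defining identity of a disintegration of $\mu$ over $(\pi,\nu)$; since each $\mu_y$ is supported on $\pi^{-1}(y)$, the family $\{\mu_y\}$ is the disintegration of $\mu$, and by construction $\mu_y$ is the uniform distribution on $\pi^{-1}(y)$ for $\nu$-a.e. $y$, as required. Uniqueness then follows from the essential uniqueness of disintegrations on Polish spaces: any invariant $\mu'$ with $\pi\mu' = \nu$ whose disintegration $\{\mu'_y\}$ has the stated property must satisfy $\mu'_y = \mu_y$ for $\nu$-a.e. $y$, whence $\mu' = \int_Y \mu'_y\,d\nu = \int_Y \mu_y\,d\nu = \mu$. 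I expect the only genuinely delicate point to be the measurability of the family $\{\mu_y\}$, and this is exactly why Lemma~\ref{lem:fiber-is-measurable} was isolated beforehand; once measurability is in hand, every remaining step is a routine fiber-counting or change-of-variables computation.
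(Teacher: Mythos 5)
Your proposal is correct and follows essentially the same route as the paper: both constructions define $\mu$ by integrating the fiber-uniform measures against $\nu$ (your $\mu_y(A)=\tfrac1d|\pi^{-1}(y)\cap A|$ is exactly the paper's $F_A(y)/d$), both lean on Lemma~\ref{lem:fiber-is-measurable} for measurability, both prove invariance from the identity $\mu_y(T^{-1}A)=\mu_{Sy}(A)$ together with $S$-invariance of $\nu$, and both obtain uniqueness by recovering any competing measure from its disintegration. Your write-up merely spells out a few steps the paper leaves as routine (the projection computation and the fiber equivariance $T\pi^{-1}(y)=\pi^{-1}(Sy)$), which is fine.
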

\begin{proof}
  (Existence)
  For each Borel measurable $A\subset X$, we define $$\mu(A) = \frac{\int_Y F_A(y) d\nu(y)}{d}.$$
  This is well defined because of the previous lemma and it is easy to verify that $\mu$ is countably additive and $\mu(\emptyset)=0$ and $\mu(X)=1$.

  $\mu$ is $T$-invariant because
  \begin{align*}
    \mu(T^{-1}A) &= \frac{\int_Y F_{T^{-1}A}(y) d\nu(y)}{d}\\
    &= \frac{\int_Y F_{A}(Sy) d\nu(y)}{d}\\
    &= \frac{\int_Y F_{A}(y) d\nu(y)}{d}
  \end{align*}
  where the last equality holds because $\nu$ is $S$-invariant. It is also easy to verify $\pi\mu = \nu$.

  Let $Y_0$ be a Borel subset of $Y$ such that $\nu(Y_0)=1$ and $F_X(y) = d$ for all $y \in Y_0$. Then the map $U: Y_0 \to M(X)$ defined by requiring that $U_y$ be the uniform distribution on the $d$ points in $\pi^{-1}(y)$ is a $\nu$-measurable map by the previous lemma because $U_y(A)=\frac{F_A(y)}{d}$ for each Borel measurable $A \subset X$. This map $U$ is a disintegration of $\mu$ over Y, since
  \begin{align*}
    \mu(A) &= \frac{\int_Y F_A(y) d\nu(y)}{d}\\
    &=\int_{Y_0} U_y(A) d\nu(y)
  \end{align*}

  (Uniqueness)
  If $\mu'$ is another such measure, then
  \begin{align*}
    \mu' &= \int_Y U_y d\nu(y)\\
    &= \mu
  \end{align*}
\end{proof}

\section{Relative joinings and degree joinings}\label{sec:deg-join}
In this section, we introduce the notion of degree joining and investigate its properties.

Recall the definition of \emph{joining}: For invariant measures $\mu$ and $\mu'$ on topological dynamical systems $(X, T)$ and $(X', T')$ respectively, a measure $\lambda$ on $X \times X'$ is called a (2-fold) joining of $\mu$ and $\mu'$ if it is a $T\times T'$-invariant measure whose margins on $X$ and $X'$ are $\mu$ and $\mu'$ respectively.
 
We are interested in a relative version of the notion of joining.
Given a factor map $\pi: (X,T) \to (Y,S)$ between topological dynamical systems, define the $n$-fold (self-)fiber product
\begin{align*}
  X^n_\pi &:= \{(x_1, x_2, \dots, x_n) \in X^n : \pi(x_1) = \pi(x_2) = \dots = \pi(x_n)\}\\
  &= \bigcup_{y\in Y} \left(\pi^{-1}(y) \times \pi^{-1}(y) \times \cdots \times \pi^{-1}(y)\right).
\end{align*}
An \emph{$n$-fold $\pi$-relative joining} is an invariant measure $\lambda$ on $X^n$ for which the $n$-fold fiber product $X^n_\pi$ is a full measure set, i.e., $\lambda(X^n_\pi) = 1$.
We will call such $\lambda$ an $n$-fold relative joining if $\pi: X \to Y$ is understood from the context.
We will say that such a measure $\lambda$ is a relative joining \emph{of margins} $\mu_1, \dots, \mu_n$ \emph{over their common image} $\nu$ if $p_i\lambda = \mu_i$ for each $i$, where $p_i: X^n \to X$ is the projection to the $i$-th component, and $\pi p_i \lambda = \nu$ for some $i$ (and hence for all $i$). We will say such a measure $\lambda$ is \emph{separating} if for $\lambda$-a.e. $(x_1, x_2, \dots, x_n)$, the points $x_1, x_2, \dots, x_n$ are $n$ distinct points, i.e., $x_i \ne x_j$ whenever $1 \le i < j \le n$.

\begin{remark}
  $\pi$-relative joinings as defined here are related to the notion of joinings of ergodic systems over a common factor as usually defined in ergodic theory.
  In ergodic theory, if $(X, T, \mu)$ and $(X', T', \mu')$ are two ergodic systems and $\pi: (X, T, \mu) \to (Y, S, \nu)$ and $\pi': (X', T', \mu') \to (Y, S, \nu)$ are homomorphisms so that $(Y, S, \nu)$ is a common factor, then a joining $\lambda$ of the two ergodic systems is called a joining of $(X, T, \mu)$ and $(X', T', \mu')$ over $(Y, S, \nu)$ if the fiber product $\{(x,x'): \pi x = \pi' x'\} \subset X \times X'$ is a full measure set w.r.t. $\lambda$ (see \cite{glasner2003ergodic}).
  A difference in our setting is that two topological dynamical systems and a factor map between them are fixed.
  Let $\pi: (X,T) \to (Y, S)$ be a factor map between topological dynamical systems.
  If $\lambda$ is a $\pi$-relative joining of $\mu_1, \dots, \mu_n \in E(X, T)$ over $\nu \in E(Y,S)$, then $\lambda$ is also a joining of the collection of $n$ ergodic systems $(X, T, \mu_i)$, $1\le i \le n$, over a common factor $(Y,S, \nu)$.
\end{remark}

\begin{lemma}\label{lem:erg-decomp}
  Let $n$ be a positive integer and let $(X, Y, \pi, \nu)$ be a factor quadruple.
  If $\lambda$ is an $n$-fold relative joining over $\nu$, then almost every ergodic component of $\lambda$ is an $n$-fold relative joining over $\nu$. In other words, if $\lambda = \int \lambda' d\rho(\lambda')$ is the ergodic decomposition of $\lambda$, then $\lambda'$ is a relative joining over $\nu$ for $\rho$-a.e. $\lambda'$.
\end{lemma}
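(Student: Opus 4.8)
The plan is to handle the two requirements in the definition of ``relative joining over $\nu$'' separately: first that $\rho$-almost every ergodic component $\lambda'$ is concentrated on the fiber product $X^n_\pi$, and second that each of its coordinate projections pushes down to $\nu$. Both will follow from the single identity $\lambda(B) = \int \lambda'(B)\, d\rho(\lambda')$, valid for every Borel set $B$, which is the defining property of the ergodic decomposition together with the fact that $\rho$ is a probability measure.

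For the first requirement I would observe that $X^n_\pi = (\pi\times\cdots\times\pi)^{-1}(\Delta)$, where $\Delta \subset Y^n$ is the diagonal; since $\pi$ is continuous and $Y$ is metric, $X^n_\pi$ is closed, hence Borel, and it is $T^{\times n}$-invariant because $\pi$ intertwines $T$ and $S$. Applying the decomposition identity with $B = X^n_\pi$ gives $1 = \lambda(X^n_\pi) = \int \lambda'(X^n_\pi)\, d\rho(\lambda')$; as the integrand lies in $[0,1]$ and integrates to $1$ against a probability measure, we get $\lambda'(X^n_\pi) = 1$ for $\rho$-a.e.\ $\lambda'$. Thus $\rho$-almost every $\lambda'$ is an $n$-fold relative joining.

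For the second requirement I would fix an index $i$ and consider the map $\pi p_i \colon X^n \to Y$, which is a factor map from $(X^n, T^{\times n})$ to $(Y,S)$. Pushforward commutes with the barycenter integral, so $\nu = \pi p_i \lambda = \int (\pi p_i)_*\lambda'\, d\rho(\lambda')$. Because $\pi p_i$ intertwines the two actions, the pushforward $(\pi p_i)_*\lambda'$ of the ergodic measure $\lambda'$ is again ergodic for $\rho$-a.e.\ $\lambda'$. Hence this equation realizes the ergodic measure $\nu$ as the barycenter of a measure on $E(Y,S)$, namely the image of $\rho$ under $\lambda' \mapsto (\pi p_i)_*\lambda'$. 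By uniqueness of the ergodic decomposition, this measure on $E(Y,S)$ must be the point mass at $\nu$, so $(\pi p_i)_*\lambda' = \nu$ for $\rho$-a.e.\ $\lambda'$. Once $\lambda'(X^n_\pi) = 1$, the value of $(\pi p_i)_*\lambda'$ is independent of $i$, so this holds in all coordinates simultaneously; combining with the first requirement, $\rho$-a.e.\ $\lambda'$ is a relative joining over $\nu$.

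The only delicate step is the last one: deducing $(\pi p_i)_*\lambda' = \nu$ almost surely from the fact that $\nu$ is an average of ergodic measures. This is exactly the uniqueness of ergodic decomposition applied to the ergodic (hence extremal) measure $\nu$, together with the measurability of $\lambda' \mapsto (\pi p_i)_*\lambda'$ and the routine verification that pushforward commutes with the barycenter integral. I expect this to be the main thing to get right, whereas the concentration on $X^n_\pi$ is essentially immediate.
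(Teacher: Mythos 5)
Your proof is correct and follows essentially the same route as the paper: the concentration on $X^n_\pi$ is extracted from the barycenter identity applied to the invariant Borel set $X^n_\pi$, and the image condition is obtained by pushing the ergodic decomposition forward under $\pi p_i$ and invoking the ergodicity (extremality) of $\nu$ to force the induced decomposition to be trivial. The paper compresses the first step into ``easy to verify'' and carries out only the second; you have simply spelled out both, so there is nothing to change.
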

\begin{proof}
  It is easy to verify that almost every ergodic component of a relative joining is a relative joining.
  It only remains to show that ergodic decomposition preserves the image $\nu$. Note that the ergodic decomposition of $\lambda$ induces an ergodic decomposition of $\nu$ in the form of $$\nu = \pi p_1 \lambda = \int \pi p_1 \lambda' d\rho(\lambda')$$ but since $\nu$ is already ergodic, the induced decomposition must be trivial. Therefore, $\pi p_1 \lambda' = \nu$ for almost every $\lambda'$ and hence $\lambda'$ is a relative joining over $\nu$.
\end{proof}

Now we are ready to define and prove the existence of a \emph{degree joining}, which is a particular way of joining together all ergodic pre-images of $\nu$.

\begin{definition}
  Let $(X, Y, \pi, \nu)$ be a factor quadruple with finite degree $d := d_{\pi,\nu} < \infty$. A measure on $X^d$ is a \emph{degree joining} over $\nu$ with respect to $\pi$ if it is a $d$-fold ergodic separating relative joining over $\nu$.
\end{definition}
\begin{theorem}\label{thm:degree-joining-exists}
  For each factor quadruple with finite degree, there exists a degree joining for the quadruple.
\end{theorem}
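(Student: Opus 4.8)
The plan is to first build an explicit, highly symmetric $d$-fold relative joining by the same recipe used for the canonical lift in Theorem~\ref{thm:canonical-lift-exists}, and then extract an ergodic component from it. Write $T_d := T\times\cdots\times T$ for the product transformation on $X^d$, and let $W \subset X^d$ be the set of tuples $(x_1,\dots,x_d)$ with $\pi(x_1)=\dots=\pi(x_d)$ and with the $x_i$ pairwise distinct; that is, $W = X^d_\pi \setminus \bigcup_{i<j}\{x_i=x_j\}$, which is Borel and $T_d$-invariant (forward and backward, since $T$ is a homeomorphism). For $\nu$-a.e.\ $y$ the fiber $\pi^{-1}(y)$ has exactly $d$ points, so the set $W \cap (\pi p_1)^{-1}(y)$ of ordered $d$-tuples of distinct points over $y$ has exactly $d!$ elements. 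Applying Lemma~\ref{lem:fiber-is-measurable} to the Borel map $\pi\circ p_1: X^d \to Y$ and the Borel set $A\cap W$, the function $y \mapsto |W \cap A \cap (\pi p_1)^{-1}(y)|$ is $\nu$-measurable for every Borel $A\subset X^d$, so I would define
\[ \lambda_0(A) := \frac{1}{d!}\int_Y \bigl| W \cap A \cap (\pi p_1)^{-1}(y)\bigr|\, d\nu(y). \]

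The verification that $\lambda_0$ is a $d$-fold separating relative joining over $\nu$ then mirrors Theorem~\ref{thm:canonical-lift-exists} step for step. Countable additivity is immediate, and $\lambda_0(X^d) = \frac{1}{d!}\int_Y d!\, d\nu = 1$ by the fiber count. Invariance follows because $\pi\circ p_1$ intertwines $T_d$ with $S$ and $W$ is $T_d$-invariant: the bijection $w\mapsto T_d w$ carries $\{w\in W : \pi p_1 w = y\}$ onto $\{w'\in W: \pi p_1 w' = Sy\}$, so $|W\cap T_d^{-1}A\cap(\pi p_1)^{-1}(y)| = |W\cap A \cap (\pi p_1)^{-1}(Sy)|$, and integrating against the $S$-invariant $\nu$ gives $\lambda_0(T_d^{-1}A)=\lambda_0(A)$. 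Since $\lambda_0$ is supported on $W\subset X^d_\pi$, it is a relative joining and it is separating. Finally $\pi p_1 \lambda_0 = \nu$, because for Borel $B\subset Y$ the integrand equals $d!\cdot\mathbf 1_{y\in B}$ for $\nu$-a.e.\ $y$, whence $\lambda_0((\pi p_1)^{-1}B)=\nu(B)$. What this symmetric construction does not give, however, is ergodicity.

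Extracting ergodicity is the remaining and main point, and it is where the earlier lemmas do the work. I would take the ergodic decomposition $\lambda_0 = \int \lambda'\, d\rho(\lambda')$. By Lemma~\ref{lem:erg-decomp}, $\rho$-almost every component $\lambda'$ is again a relative joining over $\nu$. Moreover $W$ is a $T_d$-invariant Borel set with $\lambda_0(W)=1$, so $\lambda'(W)=1$ for $\rho$-a.e.\ $\lambda'$, meaning that $\rho$-a.e.\ component is still separating. Hence $\rho$-almost every $\lambda'$ is a $d$-fold ergodic separating relative joining over $\nu$ — precisely a degree joining — and since $\rho$ is a probability measure this full-measure set of good components is nonempty, so at least one degree joining exists. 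The only delicate issue I foresee is that passing to an ergodic component must preserve the separating property and the "lying over $\nu$" property \emph{simultaneously}; this is guaranteed at once by the $T_d$-invariance of $W$ and by Lemma~\ref{lem:erg-decomp}, so no additional argument is needed.
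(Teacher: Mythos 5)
Your proof is correct, and while it shares the paper's two-step skeleton---construct an explicit, symmetric, generally non-ergodic $d$-fold relative joining over $\nu$, then extract an ergodic component via Lemma~\ref{lem:erg-decomp}---the joining you start from is genuinely different, and this changes how the separating property is obtained. The paper takes the relatively independent joining $\lambda$ of the canonical lift $\ell_\pi(\nu)$ over $\nu$, with disintegration $\lambda_y = \mu_y \otimes \cdots \otimes \mu_y$; this measure is \emph{not} separating, only satisfying $\lambda(Z) = d!/d^d > 0$ where $Z$ is the set of pairwise-distinct tuples. The paper therefore needs a closing argument: the ergodic decomposition produces components with $\lambda'(Z) > 0$, and since $Z$ is invariant and $\lambda'$ is ergodic, any such component has $\lambda'(Z) = 1$. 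Your $\lambda_0$ is precisely that $\lambda$ conditioned on $Z$---uniform on the $d!$ ordered tuples of distinct points over $\nu$-a.e.\ fiber---so it is separating from the outset, \emph{every} ergodic component inherits $\lambda'(W) = 1$ (for which full measure of $W$ suffices; its invariance is not even needed at that step), and the ergodicity-upgrade argument disappears. The trade-off is that you cannot simply quote Theorem~\ref{thm:canonical-lift-exists} and the standard theory of relatively independent joinings; instead you redo that construction by hand on $X^d$, which is legitimate thanks to your application of Lemma~\ref{lem:fiber-is-measurable} to the Borel map $\pi \circ p_1$ and the Borel set $A \cap W$, together with your direct verifications of additivity, invariance, and $\pi p_1 \lambda_0 = \nu$. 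In short: the paper reuses more machinery, your version is more self-contained and makes the separation automatic rather than a consequence of ergodicity.
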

\begin{proof}
  Let $(X, Y, \pi, \nu)$ be a factor quadruple with finite degree $d := d_{\pi,\nu} < \infty$.
  Let $\mu := \ell_\pi(\nu)$ be the canonical lift of $\nu$.
  Let $\lambda$ be the $d$-fold relatively independent joining of $\mu$ over $\nu$, i.e., $\lambda$ is the measure whose disintegration over $Y$ is given by
  $$\lambda_y = \mu_y \otimes \mu_y \otimes \dots \otimes \mu_y .$$
  (See \cite{glasner2003ergodic} Chapter 6 for basic properties of relatively independent joinings.)

  The measure $\lambda$ is a relative joining over $\nu$ and by the previous lemma, almost every ergodic component of $\lambda$ is an ergodic relative joining over $\nu$.
  Since $\mu_y$ is a uniform distribution on $d$ points, we have
  \begin{align*}
    \lambda_y(Z) &= \frac{d-1}{d} \cdot \frac{d-2}{d} \cdots \frac{1}{d}\\
    &= \frac{d!}{d^d} > 0,
  \end{align*}
  where $Z$ is the set of all $(x_1,\dots,x_d) \in X^d$ such that $x_i \neq x_j$ for all $1 \le i < j \le d$. In particular, we have $\lambda(Z)>0$.

  Let $\lambda = \int \lambda' d\rho(\lambda')$ be the ergodic decomposition of $\lambda$. Then, since
  $$ 0 < \lambda(Z) = \int \lambda'(Z) d\rho(\lambda'),$$
  we have $\lambda'(Z) > 0$ for each $\lambda'$ in some $\Lambda' \subset E(X^d)$ with $\rho(\Lambda') > 0$.
  Since each $\lambda' \in \Lambda'$ is ergodic and Z is an invariant subset of $X^d$, this implies $\lambda'(Z) = 1$ and, in particular, $\lambda'$ is separating.
\end{proof}

In later sections, we will show that a degree joining can be used to unpack all ergodic lifts of $\nu$ from it and that it is usually easier to construct a degree joining than to find all lifts of $\nu$ directly. But first, we show that degree joinings are unique up to permutations of the $d$ coordinates.

\begin{lemma}\label{lem:ergodic-rel-join-exists}
  Let $(X, T, \mu)$ and $(X', T', \mu')$ be two ergodic measure preserving systems with $(Y, S, \nu)$ as a common factor. Then there is an ergodic joining of the two systems over the common factor.
\end{lemma}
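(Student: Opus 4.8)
The plan is to first exhibit \emph{some} joining of the two systems over the common factor, and then extract an ergodic one from its ergodic decomposition, mirroring the argument already used in Lemma~\ref{lem:erg-decomp}.

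First I would construct the relatively independent joining of $\mu$ and $\mu'$ over $\nu$. Writing $\pi: X \to Y$ and $\pi': X' \to Y$ for the two factor maps, $p_X, p_{X'}$ for the coordinate projections on $X \times X'$, and disintegrating $\mu = \int_Y \mu_y \, d\nu(y)$ and $\mu' = \int_Y \mu'_y \, d\nu(y)$ over the common factor, I would set
\[
  \lambda_0 := \int_Y \mu_y \otimes \mu'_y \, d\nu(y).
\]
This is the standard relatively independent product (see \cite{glasner2003ergodic} Chapter 6), and I would verify the three defining properties of a joining over $(Y,S,\nu)$. The margins are $p_X \lambda_0 = \int_Y \mu_y \, d\nu(y) = \mu$ and similarly $p_{X'}\lambda_0 = \mu'$. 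The measure $\lambda_0$ is $T\times T'$-invariant, which follows from the equivariance $T_*\mu_y = \mu_{Sy}$ and $T'_*\mu'_y = \mu'_{Sy}$ of the disintegrations. Finally, since $\mu_y$ is supported on $\pi^{-1}(y)$ and $\mu'_y$ on $(\pi')^{-1}(y)$ for $\nu$-a.e.\ $y$, the product $\mu_y \otimes \mu'_y$ is supported on $\pi^{-1}(y)\times(\pi')^{-1}(y)$, so the fiber product $F := \{(x,x'): \pi x = \pi' x'\}$ satisfies $\lambda_0(F)=1$. Hence $\lambda_0$ is a joining of the two systems over the common factor, but it need not be ergodic.

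The second step is to pass to an ergodic component. Let $\lambda_0 = \int \lambda' \, d\rho(\lambda')$ be the ergodic decomposition. I would argue, exactly as in Lemma~\ref{lem:erg-decomp}, that $\rho$-a.e.\ component $\lambda'$ is again a joining over the common factor. For the margins, projecting the decomposition to $X$ yields $\mu = \int p_X \lambda' \, d\rho(\lambda')$, which is an ergodic decomposition of the \emph{ergodic} measure $\mu$; it must therefore be trivial, so $p_X \lambda' = \mu$ for $\rho$-a.e.\ $\lambda'$, and symmetrically $p_{X'}\lambda' = \mu'$. For the fiber product condition, $F$ is a $T\times T'$-invariant measurable set with $1 = \lambda_0(F) = \int \lambda'(F)\, d\rho(\lambda')$, which forces $\lambda'(F)=1$ for $\rho$-a.e.\ $\lambda'$. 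Any such $\lambda'$ is then an ergodic joining over the common factor, as desired.

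The substantive point — and the only place the ergodicity of $\mu$ and $\mu'$ is used — is the margin preservation: a priori the ergodic components of a joining could have margins that merely average to $\mu$ and $\mu'$, and it is precisely the triviality of the induced ergodic decompositions of $\mu$ and $\mu'$ that pins the margins down on almost every component. Everything else is the routine verification that $\lambda_0$ is a joining together with the observation that an invariant full-measure set remains full on almost every ergodic component. I expect no genuine obstacle beyond keeping the disintegration and equivariance bookkeeping straight.
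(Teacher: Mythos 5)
Your proposal is correct and follows essentially the same route as the paper's proof: construct the relatively independent joining $\mu \otimes_\nu \mu' = \int_Y \mu_y \otimes \mu'_y \, d\nu(y)$ and then pass to its ergodic decomposition, using the ergodicity of $\mu$ and $\mu'$ to force almost every component to have the correct margins and the fiber product to remain a full-measure set on almost every component. The details you spell out (equivariance of the disintegrations, the triviality of the induced ergodic decompositions of the margins) are precisely the verifications the paper dismisses as ``easy to check.''
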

\begin{proof}
  We start by noting that there is at least one (not necessarily ergodic) joining $\lambda$ of $\mu, \mu'$ over $\nu$. In fact, it is easy to check that the relatively independent joining
  $$\lambda = \mu \otimes_\nu \mu' := \int \mu_y \otimes \mu'_y d\nu(y)$$
  is such a joining.

  It remains to show that the ergodic components of $\lambda$ satisfy the desired properties.
  Since $\mu, \mu', \nu$ are ergodic, almost all measures in the ergodic decomposition of $\lambda$ must also have $\mu, \mu'$ as their margins and $\nu$ as their image on $Y$. It is also easy to check that almost all measures in the ergodic decomposition are supported on the fiber product inside $X \times X'$.
\end{proof}

Degree joinings are universal with respect to other $\pi$-relative joinings over the same image in the following sense.
\begin{theorem}\label{thm:degree-joining-universal}
  Let $(X, Y, \pi, \nu)$ be a factor quadruple with finite degree $d$ and let $n$ be a positive integer.
  Let $\lambda$ be a degree joining over $\nu$ and $\lambda'$ an $n$-fold ergodic relative joining over $\nu$. Then there is a function $f: \{1,\dots, n\} \to \{1,\dots,d\}$ such that $\lambda' = p_f \lambda$ where $p_f: X^{d} \to X^n$ is the map induced by $f$ so that $$p_f(x_1, \dots, x_d) = (x_{f(1)}, \dots, x_{f(n)}).$$
\end{theorem}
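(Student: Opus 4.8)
The plan is to realize $\lambda'$ as the image of $\lambda$ under a coordinate-selection map by gluing the two joinings together over $Y$ and then exploiting ergodicity. Concretely, I would view $(X^d, T^{\times d}, \lambda)$ and $(X^n, T^{\times n}, \lambda')$ as ergodic systems: the former is ergodic because it is a degree joining, the latter by hypothesis. Both carry $(Y, S, \nu)$ as a common factor, via $\pi \circ p_1$ in each case (well defined $\lambda$- and $\lambda'$-a.e., since both are relative joinings over $\nu$). By Lemma~\ref{lem:ergodic-rel-join-exists}, there is then an ergodic joining $\Theta$ of these two systems over the common factor $(Y, S, \nu)$. Thus $\Theta$ is an invariant ergodic measure on $X^d \times X^n$ with first margin $\lambda$, second margin $\lambda'$, and supported on the fiber product over $Y$.

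Next I would read off the rigid structure on the support of $\Theta$. For a $\Theta$-typical point $((x_1,\dots,x_d),(x'_1,\dots,x'_n))$, all $d+n$ coordinates share the same image $y := \pi(x_1)\in Y$, and for $\nu$-a.e.\ $y$ the fiber $\pi^{-1}(y)$ consists of exactly $d$ points. Since the first margin of $\Theta$ is the \emph{separating} joining $\lambda$, the coordinates $x_1,\dots,x_d$ are $\Theta$-a.e.\ distinct, hence they exhaust $\pi^{-1}(y)$. Each $x'_j$ lies in this same fiber, so there is a unique index, call it $g(j)$, with $x'_j = x_{g(j)}$. This defines a measurable map $g$ from a full-measure subset of $X^d \times X^n$ into the finite set of functions $\{1,\dots,n\} \to \{1,\dots,d\}$.

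The key observation is that $g$ is invariant: since $T$ is injective, $x'_j = x_{g(j)}$ if and only if $T x'_j = T x_{g(j)}$, so the matching is unchanged under the simultaneous shift on $X^d \times X^n$. Because $\Theta$ is ergodic, $g$ is $\Theta$-a.e.\ equal to a single constant function $f$. Hence $\Theta$ is concentrated on the graph $\{(x, p_f(x)) : x \in X^d\}$, so $\Theta = (\mathrm{id}\times p_f)_*\lambda$, and projecting to the second coordinate yields $\lambda' = p_f \lambda$, as desired.

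I expect the main obstacle to be the existence and correct use of the ergodic joining $\Theta$ \emph{over the common factor} $(Y,\nu)$: one must take the joining relative to $(Y,\nu)$ rather than over an abstract factor, so that the fiber-product constraint forces all $d+n$ coordinates into a single fiber $\pi^{-1}(y)$. Once $\Theta$ is in hand, the separating property of the degree joining together with the exact fiber count $d$ do the real work, guaranteeing that the $X^n$-coordinates are a deterministic, shift-invariant function of the $X^d$-coordinates; ergodicity then collapses the matching to a single $f$. The remaining points (measurability of $g$, and that the second margin of a measure concentrated on a graph equals the pushforward) are routine.
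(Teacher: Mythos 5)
Your proposal is correct and takes essentially the same route as the paper's proof: both invoke Lemma~\ref{lem:ergodic-rel-join-exists} to build an ergodic joining of $\lambda$ and $\lambda'$ over the common factor $(Y,S,\nu)$, use the exact fiber count $d$ together with the separating property of $\lambda$ to show the first $d$ coordinates exhaust the fiber and hence determine a matching function, and then use ergodicity of that joining to make the matching constant, yielding $\lambda' = p_f\lambda$. The only cosmetic difference is that you treat the matching as a single invariant function valued in the finite set of maps $\{1,\dots,n\}\to\{1,\dots,d\}$, while the paper runs the same argument one coordinate at a time.
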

We remark that in this theorem we do not assume $\lambda'$ to be separating. Therefore $n$ is allowed to be bigger than $d$ and $f$ does not have to be injective.

\begin{proof}
  There is an ergodic joining $\lambda''$ of $\lambda$ and $\lambda'$ over $\nu$, which follows from the previous lemma.
$\lambda''$ is a measure on $X^{d} \times X^{n}$.
Let $Y_0$ be a Borel subset of $Y$ such that for each $y \in Y_0$ the fiber $\pi^{-1}(y)$ consists of precisely $d$ points and $\nu(Y_0)=1$.
Let $Z_0$ be the set of all $(x_1, \cdots, x_d) \in X^d$ for which there is some $y \in Y_0$ such that $x_1, \cdots, x_d$ are the $d$ \emph{distinct} pre-images of $y$.

$Z_0$ is a Borel subset of full measure so that $\lambda(Z_0) = 1$ since it is the intersection of the following two sets, each of which is a Borel subset of $X^d$ of full measure:
\begin{align*}
  \{(x_1, \cdots, x_d) & : \pi(x_1) \in Y_0 \} \\
  \{(x_1, \cdots, x_d) & : \pi(x_1) = \cdots = \pi(x_d),\  x_i \ne x_j \text{ for all } 1 \le i < j \le d \}
\end{align*}

Let $W$ be the set of all $(x_1, \dots, x_d, x'_1, \dots, x'_n) \subset X^d \times X^n$ such that $\pi(x_1) = \cdots = \pi(x_d) = \pi(x'_1) = \cdots = \pi(x'_n)$ and $(x_1, \cdots, x_d) \in Z_0$. It is easy to see that $\lambda''(W) = 1$, because $\lambda''$ is a relative joining of $\lambda$ and $\lambda'$ over $\nu$ and $\lambda(Z_0)=1$.

For each $(x_1, \dots, x_d, x'_1, \dots, x'_n) \in W$, the points $x_1, \dots, x_d$ are $d$ distinct pre-images of a point $y$ in $Y_0$ (hence they are all the $d$ pre-images of that point $y$) and $(x'_1,\dots,x'_n)$ is a finite sequence of pre-images of the same point $y$, and therefore in particular, the point $x'_1$ for example is equal to one and only point among $x_1, \dots, x_d$. In other words,  there is a function $g: W \to \{1,\dots, d\}$ such that $$x'_1 = x_{g(x_1, \dots, x_d, x'_1, \dots, x'_n)}$$ holds for all $(x_1, \dots, x_d, x'_1, \dots, x'_n) \in W$.

The function $g$ is measurable and $\lambda''$-a.e. defined on $X^d \times X^n$. Since $g$ is $\lambda''$-a.e. invariant w.r.t. the product action $T \times T \times \cdots \times T$ on $X^d \times X^n$ and $\lambda''$ is ergodic, the function $g$ must be $\lambda''$-a.e. constant. Define $f(1)$ to be the a.e. constant value of $g$. Define $f(2), \dots, f(n)$ similarly. The function $f: \{1,\dots, n\} \to \{1,\dots,d\}$ defined in this way has the desired property because 
$$(x'_1,\dots, x'_n) = (x_{f(1)}, \dots, x_{f(n)}) = p_f(x_1,\dots,x_d)$$
holds for $\lambda''$-a.e. $(x_1, \dots, x_d, x'_1, \dots, x'_n)$.
\end{proof}
Conversely, each measure of the form $p_f \lambda$ where $\lambda$ is a degree joining over $\nu$ is an $n$-fold ergodic $\pi$-relative joining over $\nu$.
Since any relative joining over $\nu$ decomposes by ergodic decomposition into ergodic relative joinings over $\nu$, we have just classified all possible $\pi$-relative joinings over $\nu$ in the following sense. Any $n$-fold $\pi$-relative joining over $\nu$ is a convex combination $\sum_f a_f \cdot p_f \lambda$ for some coefficients $a_f \ge 0$ whose sum is 1, where $\lambda$ is a fixed degree joining. This is a finite convex combination because there are only $d^n$ possibilities for $f$.

Universality implies uniqueness of degree joining up to permutation as proved in the following theorem.
\begin{theorem}\label{thm:degree-joining-unique}
  Let $(X, Y, \pi, \nu)$ be a factor quadruple with finite degree $d$ and let $n$ be a positive integer. If $\lambda$ and $\lambda'$ are degree joinings over $\nu$, then there is a permutation $f$ of $\{1, \dots, d\}$ such that $\lambda' = p_f\lambda$ and therefore also $\lambda = p_{f^{-1}}\lambda'$.
\end{theorem}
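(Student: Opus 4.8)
The plan is to read this off the universality theorem (Theorem~\ref{thm:degree-joining-universal}) and then promote the resulting function to a permutation using the separating hypothesis. First I would observe that since $\lambda'$ is a degree joining, it is in particular a $d$-fold ergodic relative joining over $\nu$. Applying Theorem~\ref{thm:degree-joining-universal} with $\lambda$ in the role of the degree joining and $\lambda'$ in the role of the $n$-fold ergodic relative joining (taking $n=d$), I obtain a function $f:\{1,\dots,d\}\to\{1,\dots,d\}$ with $\lambda' = p_f\lambda$.

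The only substantive remaining step is to upgrade $f$ to a permutation, and the key is that the separating property of $\lambda'$ forces $f$ to be injective. Indeed, if $f(i)=f(j)$ for some $i\ne j$, then for every $(x_1,\dots,x_d)\in X^d$ the $i$-th and $j$-th coordinates of $p_f(x_1,\dots,x_d)$ agree, both being equal to $x_{f(i)}$; hence $\lambda'=p_f\lambda$ would be carried by the set $\{(z_1,\dots,z_d)\in X^d: z_i=z_j\}$, contradicting that $\lambda'$ is separating (its coordinates are $\lambda'$-a.e.\ distinct). Therefore $f$ is injective, and since it is a self-map of the finite set $\{1,\dots,d\}$, injectivity is equivalent to bijectivity, so $f$ is a permutation.

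Finally, the relation $\lambda = p_{f^{-1}}\lambda'$ follows formally once $f$ is known to be a permutation: a direct check gives $p_{f^{-1}}\circ p_f = \mathrm{id}_{X^d}$, since $\bigl(p_{f^{-1}}(p_f x)\bigr)_k = (p_f x)_{f^{-1}(k)} = x_{f(f^{-1}(k))} = x_k$, and hence $p_{f^{-1}}\lambda' = p_{f^{-1}}p_f\lambda = \lambda$. I do not expect a genuine obstacle here, as the heavy lifting is already done by Theorem~\ref{thm:degree-joining-universal}; the one point needing care is recognizing that the separating hypothesis on $\lambda'$ is exactly what excludes a non-injective $f$, combined with the elementary fact that an injective self-map of a finite set is a permutation.
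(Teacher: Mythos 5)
Your proposal is correct and follows essentially the same route as the paper: invoke Theorem~\ref{thm:degree-joining-universal} with $n=d$ to get $f$ with $\lambda'=p_f\lambda$, then use the separating property of $\lambda'$ to rule out a coordinate collision $f(i)=f(j)$, $i\ne j$ (the paper phrases this as $f$ failing to be surjective, which for a self-map of a finite set is the same thing). Your explicit verification that $p_{f^{-1}}\circ p_f=\mathrm{id}_{X^d}$ is a minor addition the paper leaves implicit.
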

\begin{proof}
  There is a function $f: \{1,\dots, d\} \to \{1,\dots,d\}$ such that $\lambda' = p_f \lambda$.
  Suppose to the contrary that $f$ is not surjective. Without loss of generality, we may assume $f(1) = f(2) = 1$.

  For $\lambda$-a.e. $(x_1, \dots, x_d)$ we have that $p_f(x_1, \dots, x_d)$ is of the form $(x'_1, \dots, x'_d)$ with $x'_1 = x'_2$. Therefore, for $\lambda'$-a.e. $(x'_1, \dots, x'_d)$, we have $x'_1 = x'_2$ but this contradicts the assumption that $\lambda'$ is separating.
\end{proof}

\section{Multiplicity structure}\label{sec:mult}
In this section, we define multiplicity of ergodic measures on $X$ and extract the multiplicity structure of the measure fiber from the degree joining.

Having established the uniqueness of degree joining, we now show that its margins are precisely the ergodic lifts of $\nu$.
This property is why degree joinings are a useful tool to investigate the lifts of ergodic measures under a.e. finite-to-one factor maps.

\begin{theorem}
  Let $(X, Y, \pi, \nu)$ be a factor quadruple with finite degree $d$ and $\lambda$ a degree joining over $\nu$. Then $$\{p_i \lambda: 1\le i \le d\}$$ is the set of all ergodic measures in $\pi^{-1}(\nu)$.
\end{theorem}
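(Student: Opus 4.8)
The plan is to prove two inclusions: first, that every coordinate margin $p_i\lambda$ is an ergodic measure lifting $\nu$, and second, that conversely every ergodic lift of $\nu$ arises as some $p_i\lambda$. The second inclusion is the real content of the statement, and I expect it to follow almost immediately from the universality theorem (Theorem~\ref{thm:degree-joining-universal}) applied in the degenerate case $n=1$.

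For the forward inclusion, I would first observe that each projection $p_i: X^d \to X$ intertwines the product action $T\times\cdots\times T$ with $T$, so it is a factor map of dynamical systems. Since $\lambda$ is ergodic by the definition of degree joining, its pushforward $p_i\lambda$ is ergodic as well, because a factor of an ergodic system is ergodic. That $p_i\lambda$ lifts $\nu$ is built into the definition of a relative joining over $\nu$: we have $\pi p_i\lambda = \nu$ for every $i$. Hence each $p_i\lambda$ indeed belongs to the set of ergodic measures in $\pi^{-1}(\nu)$.

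For the reverse inclusion, let $\mu$ be any ergodic measure on $X$ with $\pi\mu=\nu$. I would note that $\mu$ is, trivially, a $1$-fold relative joining over $\nu$: the $1$-fold fiber product $X^1_\pi$ is all of $X$, so the condition $\mu(X^1_\pi)=1$ holds automatically, and $\pi p_1\mu = \pi\mu = \nu$. Since $\mu$ is ergodic, it is a $1$-fold ergodic relative joining over $\nu$, and Theorem~\ref{thm:degree-joining-universal} with $n=1$ then produces a function $f:\{1\}\to\{1,\dots,d\}$ with $\mu = p_f\lambda$. But for such an $f$ the induced map $p_f$ is precisely the coordinate projection $p_{f(1)}$, so $\mu = p_{f(1)}\lambda$ lies in the asserted set.

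The two inclusions together give the claim. I do not anticipate a serious obstacle at this stage: the genuine difficulty was already absorbed into establishing the existence and universality of degree joinings, and the only points demanding care are the elementary facts that a coordinate projection is a factor map carrying ergodic measures to ergodic measures, and that a single ergodic lift qualifies as a trivial $1$-fold relative joining so that universality applies verbatim with $n=1$. It is worth remarking explicitly that the $p_i\lambda$ need not be distinct, so this statement asserts only an equality of sets and leaves the counting-with-multiplicity refinement to the subsequent development.
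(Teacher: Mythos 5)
Your proof is correct and follows essentially the same route as the paper: the forward inclusion via the fact that each coordinate projection is a factor map (so ergodicity and the lift condition pass to the margins), and the reverse inclusion by viewing an ergodic lift as a $1$-fold ergodic relative joining and invoking the universality theorem with $n=1$. Your write-up merely spells out in more detail what the paper states compactly.
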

\begin{proof}
  Each margin $p_i\lambda$ is an ergodic measure on $X$ that maps to $\nu$, because $\lambda$ is an ergodic joining over $\nu$ and each projection $p_i: X^d \to X$ is a factor map.

  Each ergodic measure in $\pi^{-1}\nu$ is a 1-fold ergodic relative joining over $\nu$ and hence Theorem~\ref{thm:degree-joining-universal} applies to it and therefore is one of the margins of $\lambda$.
\end{proof}

The above theorem allows us to define multiplicity of ergodic measures in the following way.

\begin{definition}
  Let $(X, Y, \pi, \nu)$ be a factor quadruple with finite degree $d$. Let $\mu \in E(X)$ be an ergodic lift of $\nu$. The \emph{multiplicity}, denoted $m_\pi(\mu)$, of $\mu$ with respect to $\pi$ is the number of times it appears as a margin in a degree joining over $\nu$. In other words,
  $$m_\pi(\mu) := \#\{i : 1\le i \le d,\ p_i \lambda = \mu\}$$
  where $\lambda$ is a degree joining over $\nu$.
\end{definition}
Since degree joining is unique up to permutation, the notion of multiplicity above is well defined, i.e., it does not depend on the choice of $\lambda$. Our goal in defining this notion was to establish the following result which looks like a trivial result until we give different characterizations of multiplicity later in this section.
\begin{theorem}
  Let $(X, Y, \pi, \nu)$ be a factor quadruple with finite degree $d$.
  Then
  $$d = \sum_\mu m_\pi(\mu)$$
  where $\mu$ runs over all ergodic lifts of $\nu$.
\end{theorem}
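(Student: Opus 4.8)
The plan is to observe that this statement is essentially a bookkeeping identity that follows immediately from the preceding theorem together with the definition of multiplicity; the conceptual work has already been front-loaded into the construction, uniqueness, and margin-characterization of the degree joining. First I would fix a degree joining $\lambda$ over $\nu$, which exists by Theorem~\ref{thm:degree-joining-exists} and is a measure on the $d$-fold fiber product $X^d$. By the previous theorem, the collection of its coordinate margins $\{p_i\lambda : 1 \le i \le d\}$ is precisely the set of all ergodic measures in $\pi^{-1}(\nu)$. In particular, every index $i \in \{1,\dots,d\}$ yields a margin $p_i\lambda$ that is one of the (finitely many) ergodic lifts of $\nu$, and conversely every ergodic lift arises as at least one such margin.

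The core of the argument is then a partition of the finite index set $\{1,\dots,d\}$. Consider the map $i \mapsto p_i\lambda$ sending each coordinate to the ergodic measure realized by that margin. This map is a surjection from $\{1,\dots,d\}$ onto the set of ergodic lifts of $\nu$, by the margin-characterization just recalled. Its fibers are exactly the sets $\{i : 1 \le i \le d,\ p_i\lambda = \mu\}$ indexed by the ergodic lifts $\mu$, and by definition the cardinality of the fiber over $\mu$ is the multiplicity $m_\pi(\mu)$. Since the fibers of a surjection partition its domain, summing their cardinalities recovers the size of the domain:
$$d = \#\{1,\dots,d\} = \sum_\mu \#\{i : p_i\lambda = \mu\} = \sum_\mu m_\pi(\mu),$$
where $\mu$ ranges over the ergodic lifts of $\nu$. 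This is the desired identity.

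Two points deserve a remark rather than real effort. First, the sum is genuinely finite and over distinct measures: there are at most $d$ ergodic lifts, and each is counted once, with its multiplicity accounting for the number of coordinates it occupies. Second, the whole computation is independent of the chosen degree joining, since Theorem~\ref{thm:degree-joining-unique} shows that any two degree joinings differ by a permutation of the $d$ coordinates, and such a permutation leaves the multiset of margins, and hence each multiplicity $m_\pi(\mu)$, unchanged. I do not expect any genuine obstacle here; the theorem is deliberately elementary given the machinery, and, as the paper notes, its interest lies entirely in the alternative characterizations of multiplicity to be developed afterward.
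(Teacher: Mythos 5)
Your proof is correct and matches the paper's intent exactly: the paper gives no explicit proof of this theorem precisely because it is, as you say, a bookkeeping identity immediate from the definition $m_\pi(\mu) = \#\{i : p_i\lambda = \mu\}$ together with the preceding theorem that the margins $\{p_i\lambda : 1\le i\le d\}$ are exactly the ergodic lifts of $\nu$. Your partition-of-the-index-set argument and the remark on independence of the choice of $\lambda$ (via uniqueness up to permutation) spell out precisely what the paper leaves implicit.
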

In particular, the degree $d$ is an upper bound on the number of ergodic lifts. In some sense, we can say there are $d$ such lifts if we count with multiplicity.

\begin{theorem}\label{thm:fiber-generic-points}
  Let $(X, Y, \pi, \nu)$ be a factor quadruple with finite degree $d$. Then for $\nu$-a.e. $y \in Y$, each point in the fiber $\pi^{-1}(y)$ is a generic point for some ergodic measure in $\pi^{-1}(\nu)$. Furthermore, let $\mu_1, \dots, \mu_k$ be all ergodic lifts of $\nu$ and let $m_1,\dots, m_k$ be their multiplicities. Then for $\nu$-a.e. $y \in Y$, the fiber $\pi^{-1}(y)$ consists precisely of $m_1$ points generic for $\mu_1$, and $m_2$ points generic for $\mu_2$, \dots, and $m_k$ points generic for $\mu_k$.
\end{theorem}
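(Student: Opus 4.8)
The plan is to extract everything from a single degree joining $\lambda$ over $\nu$, exploiting that its margins $p_1\lambda, \dots, p_d\lambda$ are exactly the ergodic lifts of $\nu$ and that, by definition of multiplicity, $m_j = \#\{i : p_i\lambda = \mu_j\}$. The engine of the argument is that genericity for $\lambda$ under the product action descends to genericity of each coordinate for the corresponding margin.

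First I would let $G \subset X^d$ be the set of points that are generic for $\lambda$ under $T \times \cdots \times T$. Since $\lambda$ is ergodic, the pointwise ergodic theorem applied to a countable dense family of continuous functions (together with metrizability of the weak$^*$ topology) gives $\lambda(G) = 1$. The key observation is that if $(x_1, \dots, x_d) \in G$, then pushing the Ces\`aro averages $\frac1N\sum_{n=1}^N (T\times\cdots\times T)^n \delta_{(x_1,\dots,x_d)}$ forward under the continuous projection $p_i: X^d \to X$ yields $\frac1N\sum_{n=1}^N T^n \delta_{x_i} \to p_i\lambda$; hence each coordinate $x_i$ is generic for the margin $p_i\lambda$, which is one of $\mu_1, \dots, \mu_k$.

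Next I would disintegrate $\lambda$ over $Y$ along $q := \pi\circ p_1 : X^d \to Y$, which satisfies $q_*\lambda = \nu$, to obtain conditionals $\{\lambda_y\}$. From $\lambda(G) = \int \lambda_y(G)\,d\nu(y) = 1$ I get $\lambda_y(G) = 1$ for $\nu$-a.e.\ $y$. I would then fix one $\nu$-full set $Y^*$ on which simultaneously: $|\pi^{-1}(y)| = d$; the conditional $\lambda_y$ is concentrated on tuples whose coordinates all lie in $\pi^{-1}(y)$ (because $\lambda$ is a relative joining); these coordinates are pairwise distinct (because $\lambda$ is separating); and $\lambda_y(G) = 1$. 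For $y \in Y^*$ I pick any tuple $(x_1, \dots, x_d)$ in $G$ from the full-measure set supporting $\lambda_y$; its $d$ coordinates are distinct points of the $d$-point fiber $\pi^{-1}(y)$, so they enumerate the fiber bijectively. By the coordinate-genericity step each $x_i$ is generic for $p_i\lambda$, and since a point is generic for at most one measure this assigns to each fiber point its unique ergodic measure, proving the first claim. For the refined count, the number of indices $i$ with $p_i\lambda = \mu_j$ is exactly $m_j$, so exactly $m_j$ of the fiber points are generic for $\mu_j$.

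The step requiring the most care is verifying that the conditionals $\lambda_y$ really are supported on bijective enumerations of the fiber for $\nu$-a.e.\ $y$: this is precisely where the separating property and the a.e.\ $d$-point fiber must be combined, and where I must ensure the measurability bookkeeping in the disintegration legitimizes the identity $\lambda(G) = \int \lambda_y(G)\,d\nu(y)$. By contrast, the descent of genericity to coordinates, though it is the conceptual heart, is routine once the continuity of each $p_i$ is invoked.
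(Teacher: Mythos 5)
Your proof is correct, and it shares the paper's skeleton: fix a degree joining $\lambda$, produce a full-$\lambda$-measure set of tuples whose $d$ distinct coordinates exhaust a $d$-point fiber and are generic for the corresponding margins, then transfer this to a full-$\nu$-measure set of points $y$. The two technical steps, however, are implemented differently. For genericity, the paper pulls back: it intersects the set $Z_0$ of tuples enumerating a $d$-point fiber with the sets $p_i^{-1}G_i$, where $G_i$ is the set of generic points of the $i$-th margin, each of full margin-measure by ergodicity; you push forward instead, proving that a tuple generic for $\lambda$ has every coordinate generic for $p_i\lambda$ because each $p_i$ is continuous --- a slightly longer route to the same full-measure set of good tuples. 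For the transfer to $Y$, the paper takes the forward image $Y' = \pi p_1(Z')$ of its full-measure Borel set and must invoke the descriptive-set-theoretic point that this image is analytic, hence universally measurable though possibly not Borel, and of full $\nu$-measure; you instead disintegrate $\lambda$ over $Y$ via $q = \pi\circ p_1$ and, for $\nu$-a.e.\ $y$, extract a tuple from the conditional $\lambda_y$, which gives full measure to tuples lying in $G$, having pairwise distinct coordinates, and sitting inside the $d$-point fiber $\pi^{-1}(y)$. Your route avoids the non-Borel-image issue at the cost of invoking existence of disintegrations (Rokhlin), unproblematic here since the spaces are compact metric; the paper's route avoids conditional measures but leans on the analyticity machinery it had already set up in its measurability lemmas. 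Both arguments are complete, and your final count --- exactly $m_j$ fiber points generic for $\mu_j$, since a point is generic for at most one measure and $m_j = \#\{i : p_i\lambda = \mu_j\}$ --- matches the paper's definition of multiplicity exactly.
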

\begin{proof}
  Let $\lambda$ be a degree joining over $\nu$. For $\lambda$-a.e. $(x_1, \dots, x_d)$, we have that $x_1$ is generic for $p_1 \lambda$, and $x_2$ is generic for $p_2 \lambda$, and so on. The desired conclusion follows by transferring to $Y$.

More precisely, for each $1 \le i \le d$, since $\mu_i$ is ergodic, there is a Borel subset $G_i \subset X$ such that $\mu_i(G_i)=1$ and every $x \in G_i$ is generic for $\mu_i$. Let $Z_0 \subset X^d$ be defined as in the proof of Theorem~\ref{thm:degree-joining-universal}. Then the intersection
$$ Z' := Z_0 \cap  (p_1^{-1} G_1 \cap \cdots \cap p_d^{-1} G_d)$$
is a Borel subset of $X^d$ of full measure. Its image $Y' := \pi p_1 Z'$ in $Y$ is a ($\nu$-measurable but not necessarily Borel) full measure set too, i.e., $\nu(Y')=1$ because it is the image of a full measure set under the measure preserving map $\pi p_1: X^d \to Y$.
By definition, every $y \in Y'$ satisfies the desired properties.
\end{proof}
The above theorem implies in particular that if we are given a factor quadruple $(X, Y, \pi, \nu)$ with finite degree, we can read off the preimage measures $\mu_1, \dots, \mu_k$ and their multiplicities by just looking at the set $\pi^{-1}(y)$ after fixing a random point $y \in Y$ chosen according to $\nu$. Note that within the class of finite-to-one factor codes on SFTs, there are varying levels of difficulty in extracting $\pi^{-1}(y)$ from $y$ depending on $\pi$. For example, the easiest case for reading off $\pi^{-1}(y)$ from $y$ is the class of bi-closing factor codes and the next easiest case is the class of right-closing factor codes. See \cite{LM} for definitions and properties of such classes.

\begin{theorem}
  Let $(X, Y, \pi, \nu)$ be a factor quadruple with finite degree $d$. Let $\mu \in E(X)$ be an ergodic lift of $\nu$ and let $m$ be its multiplicity. Let $\{\mu_y\}_{y \in Y}$ be the disintegration of $\mu$ over $Y$. Then
  \begin{enumerate}
  \item For $\nu$-a.e. $y$, the measure $\mu_y$ is uniformly distributed on $G_\mu \cap \pi^{-1}(y)$, where $G_\mu$ is the set of points generic for $\mu$, and there are exactly $m$ points in $G_\mu \cap \pi^{-1}(y)$.
  \item $m$ is the maximum number such that there is an $m$-fold separating relative joining of margins $\mu, \dots, \mu$ over $\nu$.
  \item $(\mu \otimes_{\nu} \mu) \{(x, x'): x=x'\} = \frac1m$
  \item The factor map $\pi: (X, \mu, T) \to (Y, \nu, S)$ seen as a homomorphism between two ergodic systems is (a.e.) $m$-to-one up to null set in the sense that almost every $\mu_y$ is an atomic measure that gives positive measure to exactly $m$ points.
  \end{enumerate}
\end{theorem}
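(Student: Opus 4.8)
The plan is to derive all four assertions from a single degree joining together with the fiber description in Theorem~\ref{thm:fiber-generic-points}, with part (1) carrying essentially all the content and parts (2)--(4) following as corollaries. Fix a degree joining $\lambda$ over $\nu$ on $X^d$ and set $S = \{i : 1 \le i \le d,\ p_i\lambda = \mu\}$, so that $|S| = m$ by the definition of multiplicity. Writing $\Pi := \pi \circ p_i : X^d \to Y$ (the same map for every $i$), let $\{\lambda_y\}_{y\in Y}$ be the disintegration of $\lambda$ over $Y$ along $\Pi$. Since $\lambda$ is separating with $\lambda(Z_0)=1$ (notation from the proof of Theorem~\ref{thm:degree-joining-universal}), for $\nu$-a.e.\ $y$ the measure $\lambda_y$ is supported on the bijective enumerations $(x_1,\dots,x_d)$ of the $d$-point fiber $\pi^{-1}(y)$; and because $p_i\lambda=\mu$, essential uniqueness of disintegration over the common factor $Y$ gives $(p_i)_*\lambda_y = \mu_y$ for every $i \in S$.

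For part (1), first I would record that by Theorem~\ref{thm:fiber-generic-points}, for $\nu$-a.e.\ $y$ the set $G_\mu\cap\pi^{-1}(y)$ has exactly $m$ points; since $\supp(\mu_y)\subseteq G_\mu\cap\pi^{-1}(y)$, it remains only to show $\mu_y$ assigns mass $1/m$ to each of them. The key step is a counting identity obtained by summing $(p_i)_*\lambda_y = \mu_y$ over $i\in S$:
\[
  m\,\mu_y \;=\; \sum_{i\in S}(p_i)_*\lambda_y .
\]
Now fix $w\in G_\mu\cap\pi^{-1}(y)$. For $\lambda$-a.e.\ tuple each coordinate $x_j$ is generic for $p_j\lambda$, so the unique coordinate of an enumeration that equals $w$ must have index in $S$, because genericity determines the ergodic measure and hence forces $p_j\lambda=\mu$. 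Thus $\sum_{i\in S}\mathbf 1[x_i=w]=1$ for $\lambda_y$-a.e.\ enumeration, and integrating yields $\sum_{i\in S}(p_i)_*\lambda_y(\{w\}) = 1$, so that $m\,\mu_y(\{w\}) = 1$. This shows $\mu_y$ is uniform on the $m$ points of $G_\mu\cap\pi^{-1}(y)$, establishing (1), and since these weights sum to one they exhaust the support.

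Parts (2)--(4) then follow quickly. For (2), the projection $p_S\lambda$ onto the coordinates indexed by $S$ is an $m$-fold separating relative joining with every margin equal to $\mu$, so such joinings exist; for maximality, given any $m'$-fold separating relative joining $\eta$ of $\mu,\dots,\mu$ over $\nu$, I would pass to an ergodic component (using Lemma~\ref{lem:erg-decomp} and that the separating set is invariant and of full measure) and apply universality (Theorem~\ref{thm:degree-joining-universal}) to write $\eta = p_f\lambda$; the margin condition forces $f$ to land in $S$, while separatingness of both $\eta$ and $\lambda$ forces $f$ injective, whence $m'\le m=|S|$. Part (3) is the direct computation $(\mu\otimes_\nu\mu)\{(x,x'):x=x'\} = \int\sum_x\mu_y(\{x\})^2\,d\nu(y) = \int m\cdot m^{-2}\,d\nu(y) = 1/m$ using (1), and part (4) is the restatement that a measure uniform on $m$ points is atomic with exactly $m$ atoms. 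The main obstacle is the uniformity in (1): the containment $\supp(\mu_y)\subseteq G_\mu\cap\pi^{-1}(y)$ is immediate, but ruling out a non-uniform or smaller-support distribution requires the global rigidity supplied by the degree joining, namely that the $S$-indexed coordinates of $\lambda$ collectively and bijectively sweep out exactly the $\mu$-generic points of each fiber.
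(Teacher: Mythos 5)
Your proof is correct, but it takes a genuinely different route from the paper's on the two substantive points, namely assertion (1) and the maximality half of (2). For (1), the paper never disintegrates the degree joining: it conditions the canonical lift $\ell_\pi(\nu)$ on the generic set $G_\mu$, checks that the conditioned measure is invariant, still projects to $\nu$ (an ergodicity argument), and has the desired uniform disintegration, and then identifies it with $\mu$ via the fact that $\mu$ is the only invariant measure supported on $G_\mu$. You instead disintegrate the degree joining $\lambda$ itself over $Y$, identify $(p_i)_*\lambda_y=\mu_y$ for $i\in S$ by essential uniqueness of disintegrations, and pin down the mass $1/m$ per point by the counting argument that the unique coordinate of an enumeration equal to a given $\mu$-generic point must have index in $S$ (genericity determines the margin). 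Your route trades the paper's ``unique invariant measure carried by $G_\mu$'' fact and its ergodicity argument against disintegration-uniqueness machinery plus coordinate bookkeeping; both routes lean equally on Theorem~\ref{thm:fiber-generic-points} for the count $|G_\mu\cap\pi^{-1}(y)|=m$. For maximality in (2), the paper argues more directly than you do: an $(m+1)$-fold separating joining of copies of $\mu$ would place $m+1$ distinct points of $G_\mu$ in $\nu$-a.e.\ fiber, contradicting Theorem~\ref{thm:fiber-generic-points}, and this needs no ergodicity of the candidate joining. Your argument via universality (Theorem~\ref{thm:degree-joining-universal}) also works, but it silently uses one step: after invoking Lemma~\ref{lem:erg-decomp}, you must also know that almost every ergodic component of $\eta$ still has all margins equal to $\mu$; this holds because the ergodic measure $\mu$ is an extreme point of the set of invariant measures (the paper uses the same fact in the proof of Lemma~\ref{lem:ergodic-rel-join-exists}), so the omission is benign rather than a gap. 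Parts (3) and (4) are handled identically in both proofs.
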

\begin{proof}

(1)
To prove the first property, we start by observing that $G_\mu$ is a $T$-invariant Borel subset of $X$ such that $\mu(G_\mu)=1$ and by Theorem~\ref{thm:fiber-generic-points} that the canonical lift $\ell_\pi(\nu)$ satisfies
$$\ell_\pi(\nu)(G_\mu) = \frac{m}{d} > 0.$$
Therefore, the conditional measure $\mu'$, resulting from conditioning the canonical lift to $G_\mu$, defined by 
$$\mu'(A) := \frac{\ell_\pi(\nu)(A \cap G_\mu)}{\ell_\pi(\nu)(G_\mu)} = \frac{d}{m} \cdot {\ell_\pi(\nu)(A \cap G_\mu)}$$
for each Borel $A \subset X$, is an invariant probability measure on $X$.

The image of $\mu'$ on $Y$ is $\nu$ because otherwise $\nu$ can be written as a nontrivial convex combination of two invariant measures $\pi\mu'$ and $\pi\mu''$ both different from $\nu$ where $\mu''$ is the conditional measure resulting from conditioning the canonical lift to the complement of $G_\mu$ and that would contradict the ergodicity of $\nu$.
Now it is straightforward to show that the disintegration of $\mu'$ over $Y$ satisfies the property that for $\nu= \pi\mu'$-a.e. $y \in Y$, the measure $\mu'_y$ is the uniform distribution on the $m$-points set $G_\mu \cap \pi^{-1}(y)$.

To prove the first property in the theorem for $\mu$, it only remains to show that $\mu=\mu'$, but that follows from $\mu'(G_\mu)=1$ and the fact that the only invariant measure supported on $G_\mu$ is $\mu$ itself.

(2) 
We can obtain an $m$-fold separating relative joining of $\mu, \dots, \mu$ over $\nu$ by projecting a degree joining over $\nu$ to the $m$ coordinates for which $\mu$ is the corresponding margin.
Now suppose $\lambda'$ is an $(m+1)$-fold such joining. Then for $\lambda'$-a.e. $(x_1,\dots, x_{m+1})$, the points $x_1,\dots, x_{m+1}$ are $m+1$ distinct points and they are all in $G_\mu \cap \pi^{-1}(\pi (x_1))$. By transferring this observation to $(Y, \nu)$, we have that for $\nu$-a.e. $y$, the size of $G_\mu \cap \pi^{-1}(y)$ is at least $m+1$. But this contradicts Theorem~\ref{thm:fiber-generic-points} and so there can be no such $(m+1)$-fold joining.

(3) For $\nu$-a.e. $y$, we have $(\mu_y \otimes \mu_y) \{(x, x'): x=x'\} = \frac1m$ because $\mu_y$ is the uniform distribution on $m$ points. Integrating over $(Y, \nu)$ gives the desired result.

(4) This follows from (1).
\end{proof}

\begin{remark}
  Each of the four properties shown in the above theorem can be taken to be an alternative (but equivalent) characterization/definition of the multiplicity of $\mu$. The second property can be interpreted as saying that for the measure $\mu$ to have multiplicity bigger than one, the ($\nu$-almost every) fiber $\pi^{-1}(y)$ must allow some room for a copy of $\mu$ to get in to form a 2-fold separating self-joining. Each of the last two properties characterizes $m$ as something that depends only on the isomorphism mod 0 class of the corresponding homomorphism $\pi: (X, \mu, T) \to (Y, \nu, S)$ between the induced ergodic measure preserving transformations. In particular, $m$ is just the size of the fiber component resulting from applying the Rohlin's skew-product theorem to that homomorphism.
\end{remark}

Using the notion of canonical lift, we can obtain yet another characterization of the notion of multiplicity, as weights in the ergodic decomposition of the canonical lift.
\begin{theorem}
 Let $(X, Y, \pi, \nu)$ be a factor quadruple with finite degree $d$. Let $\mu_1, \dots, \mu_k$ be all ergodic lifts of $\nu$ and let $m_1,\dots, m_k$ be their multiplicities. Then the ergodic decomposition of the canonical lift of $\nu$ is given by
 $$\ell_\pi(\nu) = \sum_{i=1}^{k} \frac{m_i}{d} \cdot \mu_i$$
\end{theorem}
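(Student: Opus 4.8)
The plan is to slice $X$ along the invariant sets of generic points of the various ergodic lifts and read the decomposition off the fiber structure. For each $i$ write $G_{\mu_i}$ for the set of points generic for $\mu_i$. These are pairwise disjoint $T$-invariant Borel sets, since the forward orbit averages of a point can converge to at most one measure, so any point is generic for at most one $\mu_i$.

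First I would apply Theorem~\ref{thm:fiber-generic-points}: for $\nu$-a.e. $y$ the fiber $\pi^{-1}(y)$ contains exactly $m_i$ points of $G_{\mu_i}$ for each $i$, and together these exhaust all $d$ points of the fiber. By Theorem~\ref{thm:canonical-lift-exists} the canonical lift $\ell_\pi(\nu)$ disintegrates over $Y$ into the uniform distribution on each such $d$-point fiber, so integrating the indicator of $G_{\mu_i}$ against this disintegration gives
$$\ell_\pi(\nu)(G_{\mu_i}) = \frac{m_i}{d},$$
and summing shows $\ell_\pi(\nu)\bigl(\bigcup_i G_{\mu_i}\bigr)=1$.

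Next I would decompose $\ell_\pi(\nu)$ along this conull invariant partition. Each $G_{\mu_i}$ has positive measure, so the normalized restriction $\rho_i$ of $\ell_\pi(\nu)$ to $G_{\mu_i}$ is a well-defined $T$-invariant probability measure, and by additivity
$$\ell_\pi(\nu) = \sum_{i=1}^{k} \frac{m_i}{d}\,\rho_i.$$
The crux is the identification $\rho_i = \mu_i$; this is precisely the conditional measure constructed and shown to equal $\mu_i$ in the proof of property (1) of the preceding theorem. The underlying reason is that any invariant probability measure supported on $G_{\mu_i}$ equals $\mu_i$, because $\rho_i$-almost every point is simultaneously generic for $\mu_i$ and, by the ergodic theorem, generic for some ergodic component of $\rho_i$, which forces that component to be $\mu_i$. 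Substituting $\rho_i = \mu_i$ yields the asserted formula.

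I do not expect a genuine obstacle, since the substantive work already resides in Theorem~\ref{thm:fiber-generic-points} and the preceding theorem. The only point needing care is to confirm that the displayed expression is the ergodic decomposition rather than an arbitrary convex combination: each $\mu_i$ is ergodic, each weight $m_i/d$ is strictly positive since $\mu_i$ is an actual lift, and the weights sum to $1$ because $\sum_i m_i = d$, which is exactly the degree-equals-total-multiplicity theorem proved earlier.
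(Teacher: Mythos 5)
Your proposal is correct and follows essentially the same route as the paper's own proof: partition $X$ (mod $0$) by the generic-point sets $G_{\mu_i}$, use Theorem~\ref{thm:fiber-generic-points} together with the fiberwise-uniform disintegration of $\ell_\pi(\nu)$ to get $\ell_\pi(\nu)(G_{\mu_i}) = m_i/d$, identify the conditional measures $\ell_\pi(\nu)(\cdot \mid G_{\mu_i})$ with $\mu_i$ exactly as in property (1) of the preceding theorem, and observe that a convex combination of distinct ergodic measures with positive weights is the ergodic decomposition. The only cosmetic difference is that the paper simply cites the earlier proof for both ingredients, whereas you re-derive the weight computation and sketch why the only invariant measure supported on $G_{\mu_i}$ is $\mu_i$; both steps are sound.
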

\begin{proof}
For each $1 \le i \le k$, let $G_i \subset X$ be the set of all points generic for $\mu_i$.
In the proof of the first property in the previous theorem, we showed that 
$$ \ell_\pi(\nu)(G_i) = \frac{m_i}{d} $$
and that
$$ \ell_\pi(\nu)(\cdot | G_i) = \mu_i(\cdot) $$

Since $ \ell_\pi(\nu)(\cup_{i=1}^k G_i) = \sum_{i=1}^k \frac{m_i}{d} = 1 $, the collection $\{G_i\}_{1 \le i \le k}$ forms a mod 0 partition of the probability space $(X, \ell_\pi(\nu))$. We disintegrate the probability space w.r.t. this partition to obtain

$$ \ell_\pi(\nu) = \sum_{i=1}^{k} \ell_\pi(\nu)(G_i) \ell_\pi(\nu)(\cdot | G_i) = \sum_{i=1}^{k} \frac{m_i}{d} \mu_i $$

Since the measures $\mu_i$ are distinct ergodic measures and the coefficients $\frac{m_i}{d}$ are positive, the above decomposition is also the ergodic decomposition.
\end{proof}

In the above sense, the canonical lift contains all possible ergodic lifts of $\nu$.

\begin{corollary}
  Let $(X, Y, \pi, \nu)$ be a factor quadruple with finite degree $d$. The canonical lift $\ell_\pi(\nu)$ is ergodic if and only if there is only one invariant measure $\mu$ on $X$ that projects to $\nu$, in which case the canonical lift is that one measure $\mu$.
\end{corollary}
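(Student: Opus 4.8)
The plan is to read off both implications directly from the ergodic decomposition $\ell_\pi(\nu) = \sum_{i=1}^k \frac{m_i}{d}\mu_i$ established in the previous theorem, where $\mu_1,\dots,\mu_k$ are the \emph{distinct} ergodic lifts of $\nu$ and the weights $m_i/d$ are strictly positive and sum to $1$. The whole corollary then hinges on the single bookkeeping fact that $k=1$ (i.e. $\nu$ has a unique \emph{ergodic} lift) is equivalent to $\nu$ having a unique \emph{invariant} lift.

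I would begin with the $(\Leftarrow)$ direction, as it is immediate: if $\nu$ has only one invariant lift, then a fortiori it has only one ergodic lift, since ergodic lifts are in particular invariant lifts and at least one exists (the canonical lift is a lift by Theorem~\ref{thm:canonical-lift-exists}, and its ergodic decomposition produces one). Hence $k=1$, $m_1 = d$, and the decomposition collapses to $\ell_\pi(\nu) = \mu_1$, which is ergodic and is the unique lift.

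For $(\Rightarrow)$: if $\ell_\pi(\nu)$ is ergodic, its ergodic decomposition must be trivial; because the $\mu_i$ are distinct and the weights positive, triviality forces $k=1$, so $\nu$ has exactly one ergodic lift $\mu_1 = \ell_\pi(\nu)$. It then remains to upgrade ``one ergodic lift'' to ``one invariant lift.'' Given any invariant $\mu$ with $\pi\mu = \nu$, I would take its ergodic decomposition $\mu = \int \mu' \, d\tau(\mu')$ and push it forward by $\pi$ to obtain the induced decomposition $\nu = \int \pi\mu' \, d\tau(\mu')$ of $\nu$; since $\nu$ is ergodic this decomposition is trivial, so $\pi\mu' = \nu$ for $\tau$-a.e.\ $\mu'$. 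Thus every ergodic component of $\mu$ is an ergodic lift of $\nu$, and since $\mu_1$ is the only one, $\mu' = \mu_1$ for $\tau$-a.e.\ $\mu'$, whence $\mu = \mu_1$. This is precisely the ``$\nu$ ergodic forces the induced decomposition to be trivial'' device already used in Lemma~\ref{lem:erg-decomp}.

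The only step that is not pure formal reading of the decomposition formula is this last upgrade, and I expect it to be mild rather than a real obstacle: the one point requiring care is that the ergodic decomposition of an arbitrary invariant lift of $\nu$ again consists of lifts of $\nu$, which is exactly what the triviality-of-the-induced-decomposition argument secures. In the case $k=1$ the identification $\ell_\pi(\nu) = \mu_1$ of the unique lift with the canonical lift falls out of the decomposition formula itself.
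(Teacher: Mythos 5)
Your proposal is correct and takes essentially the same route as the paper: both arguments combine the ergodic decomposition $\ell_\pi(\nu) = \sum_{i=1}^{k} \frac{m_i}{d}\,\mu_i$ from the previous theorem with the observation that, since $\nu$ is ergodic, almost every ergodic component of any invariant lift of $\nu$ is again a lift of $\nu$, so ``unique ergodic lift'' and ``unique invariant lift'' coincide. The paper merely states this in a more compressed form, while you spell out the pushforward-of-the-decomposition device (the same one used in Lemma~\ref{lem:erg-decomp}) explicitly.
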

\begin{proof}
  Since $\nu$ is ergodic, almost every ergodic component of any invariant measure on $X$ that projects to $\nu$ is again a measure that projects to $\nu$. Therefore, there is only one invariant measure on $X$ that projects to $\nu$ if and only if there is only one ergodic lift of $\nu$. By the previous theorem, this is the case if and ony if the canonical lift is itself ergodic.
\end{proof}

\section{Examples}\label{sec:ex}
In this section, we build some examples before we move onto a general theory of degree joinings for symbolic dynamics.

Examples in this section come from endomorphisms of full shifts, except for one example. In particular, $X$ and $Y$ are always the same full shift in the examples. In terms of cellular automata theory, examples here are based on two linear cellular automata that generalize the rule 102 automaton in Example~\ref{first-example}. The two endomorphisms we introduce share the special property that $|\pi^{-1}(y)|$ do not depend on $y \in Y$, in other words, they are constant-to-one factor codes.
Recall that in this case, the degree $d_\nu$ of the measure $\nu\in E(Y)$ is the same for all $\nu\in E(Y)$ including those $\nu$ that are not fully supported.
We will see that measure fibers already exhibit diverse behavior within this simple class of factor codes.

\begin{example}
Let $N \in \mathbb N$.  Let $X = Y$ be the full $N$ shift. Then the factor code $\pi:X \to Y$ defined by
$$x = (x_i)_{i\in\mathbb Z} \mapsto \pi(x) := (x_{i+1}-x_{i})_{i\in\mathbb Z} \pmod{N}$$
is an $N$-to-1 map. Indeed, if the map $s: X\to X$ is defined by
$$x = (x_i)_i \mapsto (x_i+1)_i \pmod{N},$$
then we have
$$\pi^{-1}\pi x = \{x, s(x),\dots, s^{N-1}(x) \} = \{s^k(x) : k \in \mathbb Z \},$$
for all $x \in X$. For any ergodic $\mu\in E(X)$, its image $\lambda$ under the map
$$x \mapsto (x, s(x),\dots, s^{N-1}(x))$$
is a degree joining over $\pi\mu$. ($\lambda$ is ergodic because it is an image of $\mu$ under a shift-commuting map.)
Therefore, all ergodic lifts of $\pi\mu$ are in the list $\mu, s(\mu), \dots, s^{N-1}(\mu)$ and the multiplicity of $\mu$ is the number of times it appears in the list and is therefore always a divisor of $N$. The number of ergodic lifts of $\pi\mu$ also divides $N$ and $N$ is the product of that number and the multiplicity of $\mu$.

In particular, if $\mu$ is the Bernoulli product measure on $X$ given by a probability vector $(\alpha_1,\dots, \alpha_N)$, then its multiplicity is $\frac{N}{L}$ where $L$ is the least period of the sequence $(\alpha_1,\dots, \alpha_N)$.
For almost all probability vector $(\alpha_1,\dots, \alpha_N)$, the value of $L$ is the full length $N$ and $\mu, s(\mu), \cdots, s^{N-1}(\mu)$ are $N$ different lifts of $\pi\mu$.
If $N=4$ and $(\alpha_1,\dots, \alpha_4) = (\frac18, \frac38, \frac18, \frac38)$, then we have $$(\mu, s(\mu), s^2(\mu), s^3(\mu)) = (\mu, s\mu, \mu, s\mu)$$ where $s\mu$ is the different Bernoulli product measure from the shifted vector $(\frac38, \frac18, \frac38, \frac18)$. In this case, the number of ergodic lifts of $\pi\mu$ is 2, which is strictly between 1 and the degree $4$. This is over a fully supported member in $E(Y)$ and therefore should be considered less trivial than CO-measures (ergodic measures supported on periodic orbits) in $E(Y)$.

On the other hand, finding and verifying an example of $\nu \in E(Y)$ with precisely two ergodic lifts on $X$ for $N=4$ without requiring full support is elementary and does not require the degree joining theory. For example, the CO-measure $\nu$ supported on the fixed point $\cdots 222.222 \cdots \in Y$ has four pre-images in $X$ who form two periodic orbits, each with least period 2. One of the two periodic orbits is the orbit of $\cdots 0202.0202 \cdots \in X$ and the other is the orbit of $\cdots 1313.1313\cdots \in X$. The two CO-measures supported on these two periodic orbits are precisely the ergodic lifts of $\nu$ and they have multiplicity 2 because the map $\pi$ collapses each of these periodic orbits by halving their periods. We generalize this observation about CO-measures shortly in the next example before moving to a more complicated example.
\end{example}

Given the example, we raise the following question.
\begin{question}
  With $\pi: X\to Y$ from the previous example, is there a subset $E' \subset E(Y)$ such that $E'$ is a residual set in the simplex of all invariant measures on $Y$ and that each $\nu \in E'$ has exactly $N$ ergodic lifts?
\end{question}
We suspect the answer is yes.
We remark that the fully supported ergodic measures form a residual set in the simplex \cite{DGS1976ergodic}. Therefore the question can also be thought of as a question on the residual behavior of measure fibers over fully supported ergodic measures on $Y$.

\begin{example}
  \label{ex:periodic}
  Let $\pi: (X, T) \to (Y, S)$ be a factor map between topological dynamical systems. Let $\nu$ be the CO-measure supported on some periodic orbit $Y'$ in $Y$ with least period $p$. Then $d_\nu$ is finite iff the fiber over some point (or equivalently, every point) in the periodic orbit is finite. Now suppose $d_\nu$ is finite and let $d=d_\nu$. The inverse image $X' = \pi^{-1}(Y')$ contains $dp$ points. Since $X'$ is a finite subsystem of $X$, it consists only of periodic points and hence is some finite disjoint union of periodic orbits in $X$. Recall that CO-measures can be identified with periodic orbits. It is easy to verify that the ergodic lifts of $\nu$ are exactly the periodic orbits in $X'$. For each periodic orbit $X'_i$ in $X'$, let $m_i = \frac{|X'_i|}{|Y'|} = \frac{|X'_i|}{p}$. The number $m_i$ measures how $\pi$ folds the periodic orbit $X'_i$ and can be thought of as a discrete winding number. A simple counting argument shows that $\sum_i m_i = d$. It is also easy to verify that the winding number $m_i$ is the multiplicity of the CO-measure $\mu_i \in E(X)$ corresponding to $X'_i$ since the map $\pi: (X, \mu_i) \to (Y, \nu)$ seen as a factor map between two ergodic systems is a.e. $m_i$-to-one. In this sense, the multiplicity of an arbitrary ergodic measure generalizes the winding number of periodic orbits. Therefore, another motivation for the multiplicity theory is the viewpoint that the ergodic measures on a topological dynamical system is a generalization of periodic orbits.
\end{example}

For the next example which exhibits a more complicated behavior for the measure fibers, we need a lemma.

\begin{lemma}
  Let  $(X, T, \mu)$ be an ergodic system. Denote by $\mathbbm 2 = (2, S, \nu)$ the unique ergodic system consisting of two atoms. Then the following are equivalent.
  \begin{enumerate}
  \item The system $(2,S,\nu)$ is a factor of $(X,T,\mu)$.
  \item The product system $(X \times 2, T\times S, \mu \otimes \nu)$ is not ergodic.
  \end{enumerate}
  If these conditions hold, we will say that $\mathbbm 2$ is a factor of $\mu$.
\end{lemma}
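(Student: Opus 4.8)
The plan is to route both statements through a single concrete object, a measurable map $\phi: X \to \mathbb{Z}/2\mathbb{Z}$ satisfying $\phi \circ T = \phi + 1 \pmod 2$ almost everywhere. Since the only ergodic two-atom system is the one in which $S$ transposes the atoms and $\nu$ weights each by $\tfrac12$, asserting that $\mathbbm 2$ is a factor of $(X,T,\mu)$ is exactly asserting that such a $\phi$ exists: the required image condition $\phi_*\mu = \nu$ is automatic, because $\phi \circ T = \phi + 1$ forces $T\,\phi^{-1}(0) = \phi^{-1}(1)$, and $T$-invariance of $\mu$ then gives $\mu(\phi^{-1}(0)) = \mu(\phi^{-1}(1)) = \tfrac12$. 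So it suffices to prove that the existence of $\phi$ is equivalent to the non-ergodicity of the product.

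For the direction $(1) \Rightarrow (2)$ I would simply exhibit a nontrivial invariant set. Given $\phi$, put $A = \{(x,j) \in X \times \{0,1\} : \phi(x) = j\}$. The relation $\phi \circ T = \phi + 1$ makes $A$ invariant under $T \times S$, while $\mu \otimes \nu(A) = \tfrac12$ since $\phi_*\mu$ is uniform; hence the product is not ergodic.

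The content lies in $(2) \Rightarrow (1)$, where the key device is Fourier analysis in the second coordinate over $\mathbb{Z}/2\mathbb{Z}$. Every $F \in L^2(X \times \{0,1\})$ can be written uniquely as $F(x,j) = a(x) + (-1)^j b(x)$ with $a,b \in L^2(X)$, and a direct computation shows that $F$ is $T\times S$-invariant precisely when $a \circ T = a$ and $b \circ T = -b$. By ergodicity of $\mu$ the first condition forces $a$ to be constant, so a non-constant invariant $F$ — which non-ergodicity of the product provides — yields a nonzero $b$ with $b \circ T = -b$, i.e.\ an eigenfunction of $T$ for the eigenvalue $-1$. I would then upgrade this eigenfunction into the desired $\phi$: replacing $b$ by its real or imaginary part (at least one is a nonzero solution, as $-1$ is real) makes $b$ real-valued, after which $|b|$ is $T$-invariant and hence a.e.\ a positive constant $c$, so that $b$ takes only the values $\pm c$. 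Setting $\phi = 0$ on $\{b = c\}$ and $\phi = 1$ on $\{b = -c\}$ produces a map with $\phi \circ T = \phi + 1$, completing the equivalence.

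The hard part is precisely this last passage from an abstract eigenfunction to an honest $\{0,1\}$-valued factor map; everything else is bookkeeping of almost-everywhere identities. That passage is made routine by two uses of the hypotheses: that the eigenvalue in question is the real number $-1$ (letting me take a real eigenfunction) and that $\mu$ is ergodic (letting me normalize $|b|$ to a constant). I expect no further obstacles, since the uniform-image condition has already been absorbed into the first paragraph.
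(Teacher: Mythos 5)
Your proof is correct and is essentially the paper's own argument: your decomposition $F(x,j) = a(x) + (-1)^j b(x)$ is exactly the paper's sum-and-difference trick applied to $f(x,0)\pm f(x,1)$, and your passage from the eigenfunction $b$ (with $b\circ T = -b$, $|b|$ constant by ergodicity) to the $\{0,1\}$-valued map $\phi$ is precisely the paper's ``sign of $f(x,0)-f(x,1)$'' construction. The only difference is that you prove $(2)\Rightarrow(1)$ directly where the paper proves the contrapositive $\neg(1)\Rightarrow\neg(2)$, which is not a substantive distinction.
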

\begin{proof}
  If $(2, S, \nu)$ is a factor of $(X, T ,\mu)$, then $(X, T, \mu) \times (2, S, \nu)$ has a factor $(2, S, \nu) \times (2, S, \nu)$ which is not ergodic and therefore the product system is not ergodic.

  It remains to show $\neg (1) \implies \neg (2)$. Let $f: X \times 2 \to \mathbb R$ be a $\mu \otimes \nu$-a.e. $T\times S$-invariant measurable function. We want to show that this function is a.e. constant. Since $f$ is invariant, $f(x, 0) = f(Tx, 1)$ and $f(x, 1) = f(Tx, 0)$ hold for a.e. $x$. So $f(x, 0) + f(x, 1)$ is $T$-invariant and hence, by the ergodicity of $T$, a.e. constant. So for some $r \in \mathbb R$ we have $f(x, 0) + f(x, 1) = r$ a.e.
On the other hand, we have $f(x, 0) - f(x, 1) = - (f(Tx, 0) - f(Tx, 1))$. So $f(x, 0) - f(x, 1)$ is a.e. zero, because otherwise it would be a.e. nonzero by the ergodicity of $T$ and then the sign of $f(x, 0) - f(x, 1)$ can be used to form a factor map to $(2, S, \nu)$ which would contradict our starting assumption. So $f(x, 0) = f(x, 1)$ holds a.e. and therefore $f(x, 0) = f(x, 1) = \frac{r}2$.
\end{proof}

\begin{example}
  Let $X=Y$ be the full 5 shift. Then the factor code $\pi:X \to Y$ defined by
$$x = (x_i)_i \mapsto (x_{i+1}+x_{i})_i \pmod{5}$$
is a 5-to-1 map. Unlike the previous example, we are taking the sum of two consecutive numbers instead of taking the difference, making it impossible to define a shift-commuting function $s: X\to X$ to sweep inside fibers as before. Let $\mu$ be an ergodic measure on $X$ such that $\mathbbm 2$ is not a factor. To form a degree joining over $\pi\mu$, we need some auxiliary measure. Let $\eta$ be the unique ergodic measure on the shift space $Z$ consisting of two points $((-1)^{i})_i$ and $((-1)^{i+1})_i$. The image $\lambda$ of $\mu \otimes \eta$ under the map
$$(x, z) \mapsto (x, x+z, x+2z, x+3z, x+4z) \mod5$$
is a degree joining over $\pi\mu$. ($\lambda$ is ergodic because $\mu\otimes\eta$ is, by the previous lemma.) Since $x+4z \equiv x-z \mod5$ and the image of $\eta$ under the map $z \mapsto -z$ is $\eta$, we can verify that the second margin and the last margin of $\lambda$ are the same and we denote it by $\mu'$. Also, the third margin and the fourth margin are the same and we denote it by $\mu''$. The measures $\mu, \mu', \mu''$ are all ergodic lifts of $\pi\mu$.
In many cases of $\mu \in E(X)$, $\mu, \mu', \mu''$ are three distinct measures. One such case will be mentioned in the next theorem. When they are distinct, their multiplicities are 1, 2, 2 respectively and the number of ergodic lifts of $\pi\mu$ is strictly between 1 and the degree 5 and does not divide the degree.
Whenever $\nu$ is an ergodic measure on $Y$ such that $\mathbbm 2$ is not a factor, the number of its ergodic lifts is at most 3.
\end{example}

\begin{theorem}
  Let $\pi: X\to Y$ be the factor code from the previous example. Let $P = \{x\in X: x_0 = 0\}$. Let $\mu \in E(X)$ be such that $\mathbbm 2$ is not a factor and $\mu(P)>\frac12$. Then $\pi\mu$ has exactly three ergodic lifts on $X$.
\end{theorem}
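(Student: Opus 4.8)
The plan is to reduce the assertion to the pairwise distinctness of the three measures $\mu,\mu',\mu''$ constructed in the previous example. Since $\mathbbm 2$ is not a factor of $\mu$, that construction produces an \emph{ergodic} degree joining $\lambda$ over $\pi\mu$, and by the theorem identifying the margins of a degree joining with the set of all ergodic lifts, the ergodic lifts of $\pi\mu$ are exactly the distinct elements of the list $\mu,\mu',\mu'',\mu'',\mu'$ of margins of $\lambda$, i.e.\ the set $\{\mu,\mu',\mu''\}$. Thus $\pi\mu$ has exactly three ergodic lifts precisely when $\mu,\mu',\mu''$ are three distinct measures, and the whole task is to deduce this distinctness from the hypothesis $\mu(P)>\tfrac12$.

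I would compare the three measures only through the law of the single coordinate $x_0$. Write $a_j=\mu\{x:x_0=j\}$ for $j\in\mathbb Z/5$, so that $a_0=\mu(P)>\tfrac12$ and $\sum_j a_j=1$. From the description of $\mu'$ and $\mu''$ as images of $\mu\otimes\eta$, the law of $x_0$ is $a$ under $\mu$, the convolution on $\mathbb Z/5$ of $a$ with the uniform distribution on $\{1,-1\}$ under $\mu'$, and the convolution of $a$ with the uniform distribution on $\{2,-2\}$ under $\mu''$. Separating $\mu$ from $\mu'$ and $\mu''$ is then immediate by evaluating on $P$: one computes $\mu'(P)+\mu''(P)=\tfrac12(a_1+a_2+a_3+a_4)=\tfrac12(1-a_0)<\tfrac14$, while $\mu(P)=a_0>\tfrac12$, so $\mu$ differs from both $\mu'$ and $\mu''$.

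The main obstacle is the last inequality $\mu'\neq\mu''$, as these two measures agree on $P$ and on many natural cylinders. Here I would use harmonic analysis on $\mathbb Z/5$. Letting $R$ denote the cyclic shift of distributions, the coordinate laws of $\mu'$ and $\mu''$ are $\tfrac12(R+R^{-1})a$ and $\tfrac12(R^2+R^{-2})a$, so $\mu'=\mu''$ would force $a$ into the kernel of the operator $\tfrac12(R+R^{-1}-R^2-R^{-2})$. This operator is diagonalized by the characters $\chi_k(j)=\omega^{jk}$ with $\omega=e^{2\pi i/5}$, acting on frequency $k$ as multiplication by $\cos(2\pi k/5)-\cos(4\pi k/5)$, which vanishes only for $k=0$ and is nonzero for $k=1,2,3,4$. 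Hence its kernel is exactly the line of constant vectors, the only constant probability vector being the uniform one. Since $a_0>\tfrac12$ makes $a$ non-uniform, $a$ is not in this kernel, so the $x_0$-laws of $\mu'$ and $\mu''$ differ and therefore $\mu'\neq\mu''$. Combining with the previous paragraph gives the pairwise distinctness of $\mu,\mu',\mu''$, hence exactly three ergodic lifts.
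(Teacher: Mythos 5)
Your proof is correct, and your reduction to the pairwise distinctness of $\mu,\mu',\mu''$ (via ergodicity of the constructed degree joining and the theorem identifying its margins with the set of all ergodic lifts) is exactly the reduction the paper makes. Where you diverge is in how distinctness is established. The paper uses a single elementary trick: it introduces the disjoint sets $P'=\{x\in X: x_0\in\{1,4\}\}$ and $P''=\{x\in X: x_0\in\{2,3\}\}$ and shows $\mu'(P')\ge\mu(P)>\frac12$ and $\mu''(P'')\ge\mu(P)>\frac12$; since $P,P',P''$ are disjoint and each of the three measures gives more than half its mass to its own set, no two of them can coincide. Your argument instead splits into two halves: a mass estimate on $P$ separating $\mu$ from $\mu',\mu''$, and a Fourier-analytic computation on $\mathbb Z/5$ separating $\mu'$ from $\mu''$ by showing that the operator $\frac12(R+R^{-1}-R^2-R^{-2})$ has kernel equal to the constant vectors. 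Both halves are sound: the convolution identities for the $x_0$-laws are right, and the multiplier $\cos(2\pi k/5)-\cos(4\pi k/5)$ indeed vanishes only at $k=0$. The paper's route is shorter and entirely elementary; yours is heavier but buys a sharper conclusion, namely that $\mu'\neq\mu''$ whenever the single-site law $a$ is merely non-uniform, so the hypothesis $\mu(P)>\frac12$ is only needed to pull $\mu$ away from the other two (and the same spectral computation applied to $I-\frac12(R+R^{-1})$ would show all three are pairwise distinct whenever $a$ is non-uniform). One small slip: your parenthetical claim that $\mu'$ and $\mu''$ ``agree on $P$'' is not true in general, since $\mu'(P)=\frac12(a_1+a_4)$ and $\mu''(P)=\frac12(a_2+a_3)$ need not be equal; this is harmless, as your kernel argument never uses it.
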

\begin{proof}
  Let $\mu', \mu''$ be as in the previous example. It is enough to show that $\mu, \mu', \mu''$ are different.
  Let $P' = \{x\in X: x_0 \in \{1, 4\}\}$ and $P'' = \{x\in X: x_0 \in \{2, 3\}\}$. Then we have
  \begin{align*}
    \mu'(P') &= (\mu \otimes \eta)(\{(x,z): x+z \in P' \pmod5 \}) \\
    &= (\mu \otimes \eta)(\{(x,z): x_0+z_0 \in \{1, 4\} \pmod5 \}) \\
    &\ge (\mu \otimes \eta)(\{(x,z): x_0 = 0, z_0 \in \{\pm 1\}\}) \\
    &= \mu(P) > \frac12
  \end{align*}
  Similarly, $\mu''(P'') > \frac12$.
  Since $P, P', P''$ are disjoint subsets of $X$ such that $\mu(P)>\frac12, \mu'(P')>\frac12, \mu''(P'')>\frac12$, the three measures must be different. 
\end{proof}
In particular, if $\mu$ is the Bernoulli product measure from any probability vector $(\alpha_0, \alpha_1, \cdots, \alpha_4)$ with $\alpha_0 > \frac12$, it satisfies the hypothesis of the theorem.
This example demonstrates that it is possible to have different multiplicities within one measure fiber over a fully supported ergodic measure.

\begin{example}
  Let $\pi: X\to Y$ and $(Z, \eta)$ be from the previous example, but this time we suppose $\mu\in E(X)$ has $\mathbbm 2$ as a factor. There is a shift-commuting measurable function $F: X \to Z$ such that $F\mu = \eta$. The image of $\mu$ under the map
$$x \mapsto (x, x+ F(x), x+ 2F(x), x+ 3F(x), x+ 4F(x)) \mod5$$
is a degree joining over $\pi\mu$.
When the margins are different, their multiplicities will be 1.
\end{example}

\begin{question}
  With $\pi: X\to Y$ from the previous example, is there a subset $E' \subset E(Y)$ such that $E'$ is a residual subset of the simplex of invariant measures on $Y$ and that each $\nu \in E'$ has exactly 3 ergodic lifts?
\end{question}

\begin{remark}
  One can similarly investigate the factor code with $5$ replaced by arbitrary $N>1$. We only did $N=5$ because $5$ was the smallest number to reveal the general pattern for larger $N$.
\end{remark}

\section{Degree joinings for finite to one factor codes}\label{sec:deg-symb}

In this section, we identify degree joinings for general finite to one factor codes. In this case, we show that a degree joining can be obtained by just lifting $\nu$ to an easily constructed subshift of finite type, which we will call topological degree joining.
First we recall some facts from the classical theory of degree of such factor codes.

\begin{theorem}[Theorem  8.1.19 in~\cite{LM}]\label{thm:fto-conditions}
  Let $X$ be an irreducible sofic shift and $\pi: X\to Y$ a factor code (hence $Y$ is also an irreducible sofic shift). Then the following are equivalent. The factor codes satisfying any of these conditions are called \emph{finite-to-one} factor codes.
  \begin{enumerate}
  \item For every $y \in Y$, the fiber $\pi^{-1}(y)$ is countable.
  \item For every $y \in Y$, the fiber $\pi^{-1}(y)$ is finite.
  \item The map is bounded-to-one, i.e., there is $M \in \mathbb N$ such that, for every $y \in Y$, $|\pi^{-1}(y)| \le M$.
  \item $X$ is a relative zero entropy extension of $Y$, i.e., $h(X) = h(Y)$.
  \end{enumerate}
\end{theorem}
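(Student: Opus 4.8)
The plan is to reduce to an irreducible edge shift presented by a finite graph together with a label map, and then to prove the nontrivial equivalences through the combinatorial notion of a \emph{diamond}: a pair of distinct paths in the presenting graph $G$ having the same initial vertex, the same terminal vertex, and the same $\pi$-image word. First I would make two harmless reductions. Since $X$ is only assumed sofic, I pass to its minimal right-resolving (Fischer) cover $\psi:\hat X\to X$, an irreducible SFT with $h(\hat X)=h(X)$ and $\psi$ finite-to-one; because $\psi$ is boundedly-many-to-one, $(\pi\psi)^{-1}(y)=\psi^{-1}(\pi^{-1}(y))$ shows each of the four conditions holds for $\pi$ iff it holds for $\pi\circ\psi$, so I may assume $X$ is an SFT. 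Recoding to a higher block presentation, I may further assume $X=X_G$ is an edge shift and $\pi$ is a $1$-block code given by labelling the edges of $G$ with symbols of $Y$. Throughout, $h(X)\ge h(Y)$ is automatic since $\pi$ is onto, and the implications $(3)\Rightarrow(2)\Rightarrow(1)$ are immediate; the content is to connect $(1)$, $(3)$, $(4)$.

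The no-diamond side is clean. The defining property of ``no diamond'' is exactly that a path in $G$ is determined by its label word together with its ordered pair of endpoints; hence every $Y$-word of length $n$ has at most $|V(G)|^2$ preimage paths in $X$. This single fact yields two things at once. Writing $p_n,q_n$ for the number of length-$n$ words in $X,Y$, it gives $p_n\le |V(G)|^2\, q_n$, hence $h(X)\le h(Y)$ and, with $h(X)\ge h(Y)$, property $(4)$. It also bounds bi-infinite fibers: if $\pi^{-1}(y)$ had more than $|V(G)|^2$ points, then for $n$ large these would already be distinct on the window $[-n,n]$, producing more than $|V(G)|^2$ preimage words of $y_{[-n,n]}$, a contradiction; so $|\pi^{-1}(y)|\le |V(G)|^2$, which is property $(3)$. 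Conversely, a diamond can be pumped: joining the two diamond paths to a return path supplied by irreducibility produces two distinct labelings $c_0\ne c_1$ of a common $Y$-word $\bar c$, both loops at one vertex, and then the periodic point $\bar c^{\infty}\in Y$ acquires a distinct preimage for every element of $\{0,1\}^{\mathbb Z}$, an uncountable fiber. Thus a diamond forces $\neg(1)$, so $(1)\Rightarrow$ no diamond $\Rightarrow (3)$ and $(4)$; together with the trivial implications this makes $(1),(2),(3)$ equivalent and shows they imply $(4)$.

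It remains to prove $(4)\Rightarrow(1)$, which I would do by contraposition: if some fiber is infinite then $h(X)>h(Y)$. An infinite fiber over $y$ forces the number of preimage paths of the central word $y_{[-n,n]}$ to tend to infinity (a bi-infinite point is determined by its finite central words), so for large $n$ some $Y$-word has more than $|V(G)|^2$ preimage paths; by the pigeonhole principle two of them share both endpoints, i.e. a diamond exists. The crux is then the strict inequality \emph{diamond $\Rightarrow h(X)>h(Y)$}. In the edge-shift language, letting $A_a$ be the integer $V\times V$ matrix counting edges labelled $a$, the number of preimage paths of a word $w=w_1\cdots w_n$ between a given endpoint pair is the corresponding entry of $A_{w_1}\cdots A_{w_n}$; ``no diamond'' means all such products are $0/1$ matrices, while a diamond means some product has an entry $\ge 2$. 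Hence the statement becomes that an irreducible nonnegative integer matrix cocycle whose word-products are not all $0/1$ has entry-growth rate $e^{h(X)}$ strictly larger than the growth rate $e^{h(Y)}$ of the set of words with nonzero product.

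I expect this last step to be the main obstacle. The heuristic is a grafting argument: by irreducibility one interleaves free excursions of $G$ (which reconstruct a near-$h(Y)$ supply of distinct $Y$-images) with repetitions of the diamond loop, each of which contributes an invisible binary choice that multiplies the number of $X$-preimages without changing the $Y$-image, so that $p_n$ outgrows $q_n$ by a factor of order $2^{cn}$. Turning this into a genuine strict inequality is a Perron--Frobenius comparison that must be carried out carefully, so that the extra binary factors are not washed out by the ambient $h(Y)$ growth; this is precisely where the argument is delicate, and it is the only place where the full strength of irreducibility and of the entropy (rather than mere cardinality) hypotheses is used.
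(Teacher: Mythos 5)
Your reductions (passing to the Fischer cover, recoding to a $1$-block code on an edge shift) and the whole no-diamond side are correct, and they follow the same route as the proof the paper points to (the paper does not prove this theorem itself; it cites Theorem 8.1.19 of \cite{LM}, whose proof is organized around graph diamonds exactly as yours is). You correctly obtain $(1)\Leftrightarrow(2)\Leftrightarrow(3)$, that these imply $(4)$, and the pigeonhole step ``infinite fiber $\Rightarrow$ diamond'' is also fine. The genuine gap is the step you yourself flag: ``diamond $\Rightarrow h(X)>h(Y)$'' (equivalently $(4)\Rightarrow(1)$) is never proved, and the grafting heuristic you propose for it cannot be completed, because it argues in the wrong direction. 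Count what it yields: interleaving label-distinct excursions with $k$ invisible binary choices coming from a diamond loop of length about $p+c$ produces at least roughly $2^k\,q_{n-k(p+c)}$ words of length $n$ in $X$; comparing with $q_n\approx e^{h(Y)n}$, the gain factor is $2^k e^{-h(Y)k(p+c)}$, which tends to infinity only when $\log 2 > h(Y)(p+c)$. So the binary factors really are washed out whenever $h(Y)\ge (\log 2)/(p+c)$, and no Perron--Frobenius care can rescue this, since the strategy tries to prove the strict inequality by exhibiting \emph{more} $X$-words than $Y$-words, and a diamond simply does not supply enough of them.

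The standard argument --- the one behind the cited theorem --- goes the opposite way: the diamond is used to \emph{delete} part of $X$ rather than to inflate it. First trim the diamond so that its two paths already differ in their first edges (cut off the common prefix and suffix; the endpoints still match), and extend both by a common return path, using irreducibility, to get two distinct cycles $u\neq v$ at one vertex with the same label. Then show every $y\in Y$ has a preimage containing no occurrence of $u$: in a finite preimage path, replace an occurrence of $u$ by $v$; fixing an edge order with $u_1<v_1$, each replacement strictly increases the path lexicographically while preserving its label, endpoints and length, and since there are only finitely many paths with a given label the process terminates at a path avoiding $u$; a compactness/diagonal argument then produces a bi-infinite preimage of $y$ avoiding $u$. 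Hence $Y=\pi(X')$ where $X'=\{x\in X:\ x \text{ contains no occurrence of } u\}$ is a proper subshift of the irreducible SFT $X$ (proper because $u^\infty\notin X'$). Finally one invokes entropy minimality of irreducible SFTs (Corollary 4.4.9 of \cite{LM}, which is where Perron--Frobenius actually enters): $h(Y)\le h(X')<h(X)$. This deletion-plus-entropy-minimality step is exactly what your proposal is missing; everything else you wrote is sound.
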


\begin{theorem}[Lemma 9.1.13 in~\cite{LM}]\label{thm:fto-and-transitive}
  Let $X$ be an irreducible sofic shift and $\pi: X\to Y$ a finite-to-one factor code. Then a point $x \in X$ is doubly transitive if and only if its image $\pi(x)$ is.
\end{theorem}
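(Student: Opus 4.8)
The plan is to recast double transitivity of a point in terms of its orbit limit sets and then transport this information across $\pi$, using entropy only for the harder of the two implications. Recall that $x$ is doubly transitive precisely when both its forward orbit $\{\sigma^n x : n \ge 0\}$ and backward orbit $\{\sigma^{-n} x : n \ge 0\}$ are dense in $X$. Write $\omega(x) = \bigcap_{n \ge 0}\overline{\{\sigma^k x : k \ge n\}}$ and $\alpha(x) = \bigcap_{n\ge 0}\overline{\{\sigma^{-k}x : k \ge n\}}$ for the forward and backward limit sets; each is a nonempty closed $\sigma$-invariant subset of $X$, hence a subsystem, and one always has $\omega(x) \subseteq \overline{\{\sigma^n x : n\ge 0\}}$ and similarly for $\alpha$. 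Because $\pi$ is continuous and commutes with the shift, a routine compactness argument gives $\pi(\omega(x)) = \omega(\pi x)$ and $\pi(\alpha(x)) = \alpha(\pi x)$. (If $X$ is finite it is a single periodic orbit and the statement is trivial, so I assume $X$ infinite.)

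For the forward implication I would argue directly, using no finiteness: if the forward orbit of $x$ is dense, then, since the continuous surjection $\pi$ carries dense subsets of $X$ onto dense subsets of $\pi(X) = Y$, the image $\{\sigma^n \pi x : n \ge 0\}$ is dense in $Y$; the same reasoning applies to the backward orbit. Hence $\pi(x)$ is doubly transitive whenever $x$ is.

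The converse is the substantive direction, and it is where finiteness enters. Suppose $y := \pi(x)$ is doubly transitive, so that $\omega(y) = \alpha(y) = Y$. Then $\pi(\omega(x)) = \omega(y) = Y$, so the subsystem $\omega(x) \subseteq X$ maps \emph{onto} $Y$ under the restriction $\pi|_{\omega(x)}$. This restriction is still finite-to-one (indeed bounded-to-one, by the equivalence in Theorem~\ref{thm:fto-conditions}), and an onto finite-to-one factor code preserves topological entropy, so $h(\omega(x)) = h(Y)$; combining this with $h(X) = h(Y)$ from Theorem~\ref{thm:fto-conditions}(4) gives $h(\omega(x)) = h(X)$. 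Now I would invoke entropy-minimality of irreducible sofic shifts: a proper subshift of an irreducible sofic shift has strictly smaller entropy. Since $\omega(x)$ is a subsystem of $X$ carrying the full entropy of $X$, it cannot be proper, so $\omega(x) = X$; consequently $\overline{\{\sigma^n x : n\ge 0\}} \supseteq \omega(x) = X$ and $x$ is forward transitive. Running the identical argument with $\alpha(x)$ in place of $\omega(x)$ shows $x$ is backward transitive as well, so $x$ is doubly transitive.

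The main obstacle is the entropy step in the converse. One must have at hand both that onto finite-to-one factor codes preserve topological entropy and, crucially, that irreducible sofic shifts are entropy-minimal, so that the equal-entropy subsystem $\omega(x)$ is forced to be all of $X$. The finiteness hypothesis is used exactly here: for an infinite-to-one $\pi$, the set $\omega(x)$ could be a lower-entropy proper subshift that still surjects onto $Y$, and the conclusion would fail.
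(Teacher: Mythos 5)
Your proof is correct, but note that the paper itself does not prove this statement at all---it is quoted as Lemma 9.1.13 of \cite{LM}---so there is no in-paper argument to compare against. Your route (orbit limit sets, the identity $\pi(\omega(x))=\omega(\pi x)$, monotonicity of entropy under factor maps, equality of entropy for finite-to-one codes, and entropy minimality of irreducible sofic shifts) is the standard proof of this fact, and it is also precisely the mechanism the paper deploys for what it calls the measure-theoretic analogue of this theorem: in the unnamed entropy-minimality lemma preceding Lemma~\ref{lem:fto-full}, the support $\supp(\mu)$ plays exactly the role your $\omega(x)$ plays here. Two small points you should make explicit. First, the step ``$y$ doubly transitive $\Rightarrow \omega(y)=\alpha(y)=Y$'' needs $Y$ to have no isolated points: tails of the forward orbit are the full orbit minus finitely many points, and density of such sets requires perfectness. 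This is fine because $Y$ is infinite (a finite-to-one image of your assumed-infinite $X$ cannot be finite) and an infinite irreducible shift space is perfect, but it deserves a sentence, since you only disposed of the finite case for $X$. Second, your appeal to ``finite-to-one codes preserve entropy'' for the restriction $\pi|_{\omega(x)}$ (whose domain need not be sofic) is valid but unnecessary: you only need $h(\omega(x)) \ge h(\pi(\omega(x))) = h(Y)$, which holds for any factor map, so the finiteness hypothesis is consumed solely by the equality $h(X)=h(Y)$, slightly sharpening the accounting in your closing paragraph.
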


\begin{theorem}[Corollary  9.1.14 in \cite{LM}]\label{thm:fto-has-degree}
  Let $X$ be an irreducible sofic shift and $\pi: X\to Y$ a finite-to-one factor code. There is $d_\pi \in \mathbb N$ such that each doubly transitive point in $Y$ has exactly $d_\pi$ pre-images. This number $d_\pi$ is called the \emph{degree} of the factor code $\pi$.
\end{theorem}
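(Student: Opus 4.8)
The plan is to prove the result through the classical ``no-diamond'' characterization of finite-to-one codes together with the existence of a \emph{magic word}. First I would reduce to a convenient presentation: by passing to a higher block recoding I may assume that $X$ is a one-step shift of finite type and that $\pi$ is a one-block code, since neither the cardinality of a fiber $\pi^{-1}(y)$ nor the property of $y$ being doubly transitive is affected by recoding. When $X$ is strictly sofic rather than SFT, I first replace it by its minimal right-resolving (Fischer) cover, an irreducible SFT that is one-to-one over doubly transitive points; since the preimages of a doubly transitive $y$ are again doubly transitive by Theorem~\ref{thm:fto-and-transitive}, the relevant fiber cardinalities transfer and it suffices to treat the SFT case.

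With this reduction the key structural inputs are two. By Theorem~\ref{thm:fto-conditions} the code is bounded-to-one, so there is a global bound $M$ on all fiber sizes; equivalently (the standard characterization in \cite{LM}) $\pi$ admits no \emph{diamond}, that is, no pair of distinct equal-length $X$-words sharing the same first symbol, the same last symbol, and the same $\pi$-image. The second input is the existence of a magic word: a word $w$ in the language of $Y$ together with a coordinate $i$ such that any two preimage words of $w$ agreeing at coordinate $i$ are equal. I would establish this by contradiction: if no magic word existed, then at every word and every coordinate one could find two preimages agreeing there but diverging, and splicing such local ambiguities along an irreducible trajectory would manufacture a point of $Y$ with unboundedly many preimages, contradicting bounded-to-one. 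Set $d := |\pi^{-1}(w)|$, the number of preimage words of $w$.

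The heart of the proof is to show that every doubly transitive $y \in Y$ has exactly $d$ preimages. Fix such a $y$; by double transitivity the word $w$ occurs in $y$ infinitely often to the left and to the right, and by Theorem~\ref{thm:fto-and-transitive} every preimage of $y$ is itself doubly transitive in $X$. Given a global preimage $x \in \pi^{-1}(y)$, its restriction over any occurrence of $w$ is one of the $d$ preimage words of $w$; by the magic property the symbol of $x$ at the magic coordinate of that occurrence already pins this word down, and in particular fixes the two endpoint $X$-symbols of the occurrence. Between two consecutive occurrences of $w$ the image word in $Y$ is fixed and the bracketing endpoint symbols are now determined, so the no-diamond property forces the connecting segment of $x$ to be unique. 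Letting the bracketing occurrences run off to $\pm\infty$, I conclude that $x$ is completely determined by one coherent choice of preimage word of $w$ along its occurrences. Conversely, irreducibility of $X$ together with double transitivity guarantees that each of the $d$ preimage words of $w$ is realized by exactly one global preimage. This produces a bijection between $\pi^{-1}(y)$ and the $d$ preimage words of $w$, so $|\pi^{-1}(y)| = d$, and defining $d_\pi := d$ finishes the argument.

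I expect the main obstacle to be the existence of a magic word and, intertwined with it, the verification that the local-to-global passage yields a genuine bijection rather than a mere inequality. The bound $|\pi^{-1}(y)| \le d$ is the comparatively easy direction once magicness and no-diamond are available; the subtle points are (i) proving that a magic word must exist at all, which is the real combinatorial content and is exactly where bounded-to-one is used, and (ii) checking that every consistent choice of magic symbol actually extends to a bi-infinite preimage of the doubly transitive point, so that the count is exactly $d$ and is moreover independent both of $y$ and of the particular magic word chosen.
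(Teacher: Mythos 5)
The paper itself does not prove this statement; it quotes it as background from \cite{LM} (Corollary 9.1.14), so the relevant comparison is with the classical magic-word/no-diamond proof given there, which is exactly the proof you are trying to reproduce. Your reductions are fine (recoding to a 1-block code on a 1-step SFT, and passing through the degree-one Fischer cover together with Theorem~\ref{thm:fto-and-transitive} when $X$ is strictly sofic; this last step is precisely how \cite{LM} deduces the sofic case from the SFT case). The gaps are in the two steps you yourself flag as subtle, and they are genuine. First, the existence of your magic word: you define it by a property substantially stronger than the standard one, namely that the symbol at coordinate $i$ determines the \emph{entire} preimage word of $w$, and your proposed proof by contradiction fails. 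If no such word exists, you obtain for every word and every coordinate two distinct preimages agreeing at that coordinate; but the no-diamond property forces such a pair to differ at the first or the last symbol of the word (agreement at two coordinates with a disagreement strictly between them is exactly a diamond), and pairs of this boundary type cannot be spliced to multiply preimage counts: splicing multiplies counts only when the ambiguous blocks share \emph{both} endpoint symbols, i.e.\ only when there is a diamond, which is what finiteness-to-one excludes. So the negation of your statement produces no point with unboundedly many preimages, and the contradiction never materializes. The standard treatment sidesteps this entirely: in \cite{LM} the degree is \emph{defined} as the minimum over all pairs $(w,i)$ of the number of distinct symbols occurring at coordinate $i$ among preimages of $w$; a magic word is a minimizer, so existence is trivial, and all of the work goes into exploiting minimality.

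Second, even granting a strong magic word, your local-to-global argument does not yield the bijection. What your no-diamond argument shows is that a preimage $x$ of $y$ is determined by the \emph{sequence} of preimage words it selects at \emph{all} occurrences of $w$ in $y$; to conclude $|\pi^{-1}(y)| = d$ you need that the choice at one occurrence determines the choices at all the others --- equivalently, that two preimages of $y$ agreeing at one magic coordinate coincide, i.e.\ that the transition relation between magic symbols at consecutive occurrences, realized by preimages, is a perfect matching of the $d$-element magic symbol set. Nothing in your sketch supplies this, and it is the combinatorial heart of the proof in \cite{LM} (Proposition 9.1.9 and the theorem following it): it rests on the subset property of magic words --- minimality forces every extension $uwv$ of a magic word to exhibit exactly the same $d$ symbols at the magic coordinate --- combined with a counting and no-diamond argument. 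Similarly, the lower bound, that each of the $d$ magic symbols is realized by an actual bi-infinite preimage of $y$ (your claim that ``each of the $d$ preimage words of $w$ is realized by exactly one global preimage''), requires a compactness argument over longer and longer subwords of $y$, extending word preimages to point preimages and passing to limits; ``irreducibility together with double transitivity'' is not an argument here. A correct write-up along your lines would essentially have to reconstitute these two lemmas, at which point it becomes the proof of \cite{LM}; note also that Theorem~\ref{thm:fto-mut-sep}, which the paper states separately, encapsulates exactly the mutual-separation conclusion your bijection step is implicitly assuming.
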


A set of points in a 1-step SFT is \emph{mutually separated} if each pair of points never occupy the same symbol at the same time.

\begin{theorem}[\cite{LM}]\label{thm:fto-mut-sep}
  Let $X$ be an irreducible 1-step SFT and $\pi: X\to Y$ a finite-to-one 1-block factor code. Let $y \in Y$ (not necessarily doubly transitive). Then there are at least $d_\pi$ mutually separated points in the fiber $\pi^{-1}(y)$. In particular, if $y\in Y$ is such that $\pi^{-1}(y)=d_\pi$ (which is the case whenever $y$ is doubly transitive or $\pi$ is constant-to-one), then all points in the fiber are mutually separated.
\end{theorem}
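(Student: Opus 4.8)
The plan is to prove the result first for doubly transitive $y$, where the classical degree theory applies, and then to transfer mutual separation to an arbitrary fiber by a compactness/approximation argument. Throughout, write $d := d_\pi$, which is a finite natural number by Theorem~\ref{thm:fto-has-degree}.

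For the base case, suppose $y$ is doubly transitive. By Theorem~\ref{thm:fto-has-degree} it has exactly $d$ preimages, and by Theorem~\ref{thm:fto-and-transitive} each of these preimages is itself doubly transitive. I claim these $d$ preimages are mutually separated. Suppose two distinct preimages $x, x'$ satisfied $x_n = x'_n$ for some $n$. Because $\pi$ is finite-to-one it admits no diamonds (there are no two distinct $X$-paths sharing initial vertex, terminal vertex, and $\pi$-image), and a short argument shows this forces the agreement set $\{n : x_n = x'_n\}$ to be an interval of $\mathbb Z$: between any two coordinates of agreement the two paths would otherwise close up a diamond. The existence of a magic word for $\pi$ (classical degree theory, \cite{LM}), which occurs in the doubly transitive point $y$ infinitely often to the left and to the right, then pins the two preimages together beyond the finite endpoints of this interval, so the interval must be all of $\mathbb Z$ and $x = x'$, a contradiction. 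Hence distinct preimages of a doubly transitive point never share a symbol, i.e.\ they are mutually separated. (This base case is exactly the classical input and may simply be quoted from \cite{LM}.)

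For the general case, fix an arbitrary $y \in Y$. Since $Y$ is an irreducible sofic shift, its doubly transitive points are dense, so choose doubly transitive $y^{(k)} \to y$. For each $k$ let $x^{(k,1)}, \dots, x^{(k,d)}$ be the $d$ mutually separated preimages supplied by the base case. As $X^d$ is compact, pass to a subsequence along which $(x^{(k,1)}, \dots, x^{(k,d)})$ converges to some $(x^{(1)}, \dots, x^{(d)}) \in X^d$. By continuity of $\pi$ and $\pi x^{(k,i)} = y^{(k)} \to y$ we get $\pi x^{(i)} = y$ for every $i$, so all the $x^{(i)}$ lie in $\pi^{-1}(y)$. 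The decisive point is that mutual separation survives the limit: because the alphabet is finite, convergence in $X$ is eventual coordinatewise equality, so for each fixed coordinate $n$ and each pair $i \ne j$ we have $x^{(k,i)}_n = x^{(i)}_n$ and $x^{(k,j)}_n = x^{(j)}_n$ for all large $k$; since $x^{(k,i)}_n \ne x^{(k,j)}_n$, it follows that $x^{(i)}_n \ne x^{(j)}_n$. Thus $x^{(1)}, \dots, x^{(d)}$ are $d$ mutually separated --- in particular distinct --- points of $\pi^{-1}(y)$, proving the first assertion. For the ``in particular'' statement, if moreover $|\pi^{-1}(y)| = d$ (which holds when $y$ is doubly transitive by Theorem~\ref{thm:fto-has-degree}, or always when $\pi$ is constant-to-one), these $d$ mutually separated points exhaust the fiber, so every pair of points in $\pi^{-1}(y)$ is separated.

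The routine ingredients above --- density of doubly transitive points, compactness of $X^d$, and continuity of $\pi$ --- present no difficulty, and the potential worry that two of the approximating preimages might collapse onto a common limit is dispatched for free by the finiteness of the alphabet, which makes ``disagreement at coordinate $n$'' a clopen, hence limit-stable, condition. The real content, and the only genuinely hard step, is the base case: that the $d$ preimages of a doubly transitive point are mutually separated. This rests on the magic-word machinery of the classical degree theory, and it is the step I would either cite directly from \cite{LM} or reprove via the no-diamond characterization of finite-to-one codes as sketched above.
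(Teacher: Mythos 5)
Your proposal is correct and follows essentially the same route as the paper: establish the result for doubly transitive points by quoting the classical degree theory (Proposition 9.1.9 in \cite{LM}), then pass to an arbitrary $y$ by approximating it with doubly transitive points and taking limits of the $d$-tuples of preimages, the key point being that mutual separation (disagreement at every coordinate) is a condition preserved under limits in the shift space. Your compactness argument in $X^d$ is exactly the ``diagonal argument'' the paper sketches, so there is nothing to add.
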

To prove the above well known theorem using only propositions in \cite{LM}, one can follow Proposition 9.1.9 in it to establish the theorem for doubly transitive $y\in Y$ first, and then pass to arbitrary $y \in Y$ by a diagonal argument. The diagonal argument works because the property of being mutually separated is preserved under limits.

For the symbolic dynamical case of this section, we will mainly work with those $\nu \in E(Y)$ that are fully supported, because $d_\nu = d_\pi$ holds for all such $\nu$. This follows from the following lemma.

\begin{lemma}\label{lem:dt-full-in-full}
  Let $(X, T)$ be a topological dynamical system and let $\mu\in E(X, T)$ be fully supported. Then the set of doubly transitive points is a full measure set w.r.t. $\mu$.
\end{lemma}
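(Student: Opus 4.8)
The plan is to reduce the two topological density requirements to a countable family of measure-theoretic events and then invoke ergodicity together with full support. Since $X$ is a compact metric space it is second countable, so I fix a countable base $\{U_k\}_{k \in \mathbb N}$ consisting of nonempty open sets. A point $x$ has dense forward orbit precisely when its forward orbit meets every $U_k$, and dense backward orbit precisely when its backward orbit meets every $U_k$. Hence it suffices to produce, for each $k$, a full-measure set of points whose forward (respectively backward) orbit enters $U_k$, and then intersect over all $k$ and over both time directions.

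For the forward direction I would set $W_k := \bigcup_{n \ge 0} T^{-n} U_k$, the set of points whose forward orbit eventually enters $U_k$. The key observation is that $T^{-1} W_k = \bigcup_{n \ge 1} T^{-n} U_k \subseteq W_k$, so $W_k$ is invariant modulo $\mu$-null sets: since $\mu(T^{-1} W_k) = \mu(W_k)$ by invariance of $\mu$, we get $\mu(W_k \setminus T^{-1} W_k) = 0$ and therefore $\mu(W_k \triangle T^{-1} W_k) = 0$. Ergodicity of $\mu$ then forces $\mu(W_k) \in \{0,1\}$. Because $\mu$ is fully supported we have $\mu(U_k) > 0$, and since $U_k \subseteq W_k$ this pins down $\mu(W_k) = 1$. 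Intersecting over $k$, the set $\bigcap_k W_k$ has full measure and every point in it has dense forward orbit.

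For the backward direction I would run the mirror-image argument with $W'_k := \bigcup_{n \ge 0} T^{n} U_k$; here $T W'_k \subseteq W'_k$, and since $\mu$ is $T$-invariant (so that invariant-mod-null sets for $T$ and for $T^{-1}$ coincide, and $\mu$ is ergodic for $T^{-1}$ as well), the same almost-invariance reasoning yields $\mu(W'_k) = 1$. Thus $\bigcap_k W'_k$ is a full-measure set of points with dense backward orbit. The intersection
$$ \left( \bigcap_k W_k \right) \cap \left( \bigcap_k W'_k \right) $$
is a countable intersection of full-measure sets, hence itself of full measure, and by construction each of its points is doubly transitive.

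There is no serious obstacle here; the argument is entirely standard. The one point that needs care is the almost-invariance step: $W_k$ is only forward invariant as a set (one has $T^{-1}W_k \subseteq W_k$, not equality), so one must use the invariance of $\mu$ to upgrade this containment to invariance modulo null sets before the zero-one law of ergodicity can be applied. Everything else—second countability supplying the countable base, full support supplying $\mu(U_k) > 0$, and the stability of full measure under countable intersection—is immediate.
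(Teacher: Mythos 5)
Your proof is correct, and it shares the paper's overall skeleton: reduce density of the forward and backward orbits to visiting each member of a countable base, show that each visiting event has full measure using ergodicity plus full support, intersect, and handle the backward direction symmetrically via invertibility. Where you genuinely differ is the ergodicity input. The paper fixes a nonempty open $U$ and invokes the pointwise (Birkhoff) ergodic theorem: almost every point visits $U$ with asymptotic frequency $\mu(U)$, which is positive by full support, so almost every point visits $U$; the intersection over a countable base is left implicit there. You instead observe that $W_k = \bigcup_{n\ge 0}T^{-n}U_k$ satisfies $T^{-1}W_k \subseteq W_k$, upgrade this containment to $\mu(W_k \triangle T^{-1}W_k)=0$ using invariance of $\mu$, and apply the zero--one law for almost invariant sets. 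Your route is more elementary (no convergence theorem, only the definition of ergodicity in its standard almost-invariant-set form) and more explicit about the countable-base bookkeeping, while the paper's route yields a stronger quantitative fact (visits occur with positive frequency, hence infinitely often) that the lemma does not actually need. Both arguments use invertibility in the same place, the backward half: you need $T^nU_k$ to be open (hence Borel) and the zero--one law for $T^{-1}$, while the paper applies its frequency argument to the inverse map.
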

\begin{proof}
  Let $U$ be a non-empty open set. Since $\mu$ is ergodic, $\mu$-a.e. $x\in X$ has the property that its forward orbit visits $U$ with frequency given by $\mu(U)$. But $\mu(U)$ is positive because $\mu$ has full support. Therefore $\mu$-a.e. $x$ is forward transitive. Using the inverse map $T^{-1}$, it follows that $\mu$-a.e. $x$ is backward transitive as well.
\end{proof}

Another reason we work with fully supported measures is that they are preserved under lifting via finite-to-one factor codes. This is a measure-theoretical analogue of Theorem~\label{thm:fto-and-transitive} and is a direct consequence of the following lemma.
\begin{lemma}
  Let $(X, T)$ be a topological dynamical system that is entropy minimal, i.e., every proper subsystem of $(X, T)$ has strictly smaller entropy. Let $\pi: (X, T) \to (Y, S)$ be a factor map and $h(Y, S)=h(X,T)$. Then an invariant measure $\mu$ on $X$ has full support if and only if $\pi\mu$ has full support.
\end{lemma}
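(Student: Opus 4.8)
The plan is to treat the two implications separately, since only the converse direction actually needs the entropy hypotheses. Throughout I would lean on the general support identity for pushforwards: for a continuous surjection $\pi$ between compact metric spaces and an invariant $\mu$ on $X$, one has $\supp(\pi\mu) = \overline{\pi(\supp\mu)}$. I would establish this first. For the inclusion $\overline{\pi(\supp\mu)} \subseteq \supp(\pi\mu)$, if $y = \pi(x)$ with $x \in \supp\mu$ and $V$ is a neighborhood of $y$, then $\pi^{-1}(V)$ is open and meets $\supp\mu$, so $\pi\mu(V) = \mu(\pi^{-1}(V)) > 0$; for the reverse inclusion, $\overline{\pi(\supp\mu)}$ is closed and $\pi\mu(\overline{\pi(\supp\mu)}) \ge \mu(\pi^{-1}(\pi(\supp\mu))) \ge \mu(\supp\mu) = 1$, so it contains the support. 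With this identity in hand, the forward direction is immediate and uses neither entropy minimality nor $h(X) = h(Y)$: if $\mu$ has full support then $\supp(\pi\mu) = \overline{\pi(X)} = Y$ by surjectivity of $\pi$.

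For the converse I would argue by contradiction. Suppose $\pi\mu$ has full support but $\mu$ does not, and set $X_0 := \supp\mu$. Since the topological support of an invariant measure is always a subsystem and $\mu$ is not fully supported, $X_0$ is a \emph{proper} subsystem of $X$; entropy minimality then gives $h(X_0) < h(X)$. The point is that $\pi|_{X_0} \colon X_0 \to Y$ is still a factor map: because $X_0$ is compact, $\pi(X_0)$ is closed, and by the support identity it equals $\supp(\pi\mu) = Y$, so the restriction is genuinely onto. Since topological entropy does not increase under factor maps, $h(Y) \le h(X_0)$. Chaining the inequalities yields
$$h(Y) \le h(X_0) < h(X) = h(Y),$$
which is absurd. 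Hence $\mu$ must have full support.

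I do not expect a serious obstacle here; the mechanism is short once one sees the right picture, namely that a non-fully-supported $\mu$ would confine the dynamics to a proper — hence strictly lower-entropy — subsystem $X_0$ that nonetheless surjects onto all of $Y$, contradicting $h(X) = h(Y)$. The one step deserving care is verifying that $\pi|_{X_0}$ is onto rather than merely dense-image, and this is precisely where both the full support of $\pi\mu$ and the compactness of $X_0$ (which makes $\pi(X_0)$ closed) enter, through the support identity established at the outset.
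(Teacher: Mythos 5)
Your proof is correct and follows essentially the same route as the paper's: both establish the identity $\supp(\pi\mu)=\pi(\supp(\mu))$, get the forward direction immediately from surjectivity, and prove the converse by contradiction via $h(Y)\le h(\supp(\mu))<h(X)=h(Y)$ using entropy minimality and monotonicity of entropy under factor maps. The only difference is that you spell out the proof of the support identity and the closedness of $\pi(\supp(\mu))$, which the paper takes for granted from continuity and compactness.
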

\begin{proof}
  Since $\pi$ is continuous, we have $\pi(\supp(\mu)) = \supp(\pi\mu)$. In particular, $\supp(\mu)=X$ implies $\supp(\pi\mu)=Y$.
  Now it remains to prove the converse. Suppose $\pi\mu$ has full support but $\mu$ does not. Let $X_0 = \supp(\mu)$. Then $X_0$ is a proper subsystem of $X$, hence $h(X) > h(X_0)$, but we also have $\pi(X_0) = \supp(\pi\mu) = Y$ and hence $h(X_0) \ge h(\pi(X_0)) = h(Y)$. Therefore, $h(X) > h(X_0) \ge h(Y)$ which contradicts the equal entropy assumption $h(X)=h(Y)$.
\end{proof}

\begin{lemma}\label{lem:fto-full}
  Let $X$ be an irreducible sofic shift and $\pi: X\to Y$ a finite-to-one factor code. An invariant measure $\mu$ on $X$ has full support if and only if the pushforward image $\pi\mu$ has full support.
\end{lemma}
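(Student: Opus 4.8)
The plan is to reduce this statement to the preceding topological lemma, which already establishes exactly this equivalence under the two hypotheses that the domain is entropy minimal and that $h(X) = h(Y)$. So my whole task is to check that both hypotheses are met by a finite-to-one factor code $\pi : X \to Y$ on an irreducible sofic shift.

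First I would dispatch the equal-entropy hypothesis, which is free: the equivalence of conditions (1)--(4) in Theorem~\ref{thm:fto-conditions} says that a factor code on an irreducible sofic shift is finite-to-one if and only if $h(X) = h(Y)$, so $h(X) = h(Y)$ is built into the assumption that $\pi$ is finite-to-one.

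The substantive step is verifying that $X$ is entropy minimal, i.e.\ that $h(X_0) < h(X)$ for every proper subsystem $X_0 \subsetneq X$. Here I would invoke the classical fact that irreducible sofic shifts are entropy minimal. For an irreducible SFT this is standard Perron--Frobenius material (a proper subshift omits an allowed word and so has strictly smaller growth rate; see \cite{LM}). To handle the genuinely sofic case I would pass to the Fischer cover $\psi : \tilde X \to X$, the minimal right-resolving presentation, so that $\tilde X$ is an irreducible SFT with $h(\tilde X) = h(X)$ and $\psi$ is right-resolving, hence bounded-to-one. If some proper $X_0 \subsetneq X$ had $h(X_0) = h(X)$, then $\psi^{-1}(X_0)$ would be a proper subsystem of $\tilde X$ (proper since $\psi$ is surjective and $X_0$ omits a point of $X$) on which $\psi$ restricts to a bounded-to-one map; since bounded-to-one factor codes preserve entropy, this forces $h(\psi^{-1}(X_0)) = h(X_0) = h(X) = h(\tilde X)$, contradicting the entropy minimality of the irreducible SFT $\tilde X$.

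Having verified both hypotheses, I would simply quote the preceding lemma to conclude. I expect the entropy-minimality verification to be the only place requiring care, and even there the work reduces to a citation together with the short Fischer-cover argument above; everything else is automatic from Theorem~\ref{thm:fto-conditions} and the general topological lemma. (I note for completeness that the forward direction, full support of $\mu$ implying full support of $\pi\mu$, needs only continuity and surjectivity of $\pi$, so all of the above is really aimed at the converse.)
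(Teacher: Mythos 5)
Your proof is correct and follows essentially the same route as the paper's: both reduce to the preceding entropy-minimality lemma by citing Theorem~\ref{thm:fto-conditions} for $h(X)=h(Y)$ and the fact that irreducible sofic shifts are entropy minimal. The only difference is that the paper simply cites \cite{LM} for entropy minimality of irreducible sofic shifts, whereas you additionally supply a (correct) Fischer-cover argument deducing the sofic case from the SFT case.
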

\begin{proof}
  An irreducible sofic shift is entropy minimal (see~\cite{LM}). $h(X)=h(Y)$ follows from Theorem~\ref{thm:fto-conditions}. Therefore, the previous lemma applies in this case.
\end{proof}

Let $\pi: (X, T) \to (Y,S)$ be a factor map between topological dynamical systems. For each $n >0$, recall that the $n$-fold fiber product $X^n_\pi$ is a subsystem of $X^n$ where $n$-fold relative joinings live.
It is easy to check that $X^n_\pi$ is an $n$-fold \emph{topological joining} of $X$ with itself, in other words, it is a subsystem of $X^n$ with projections $p_i(X^n_\pi) = X$ for each $1\le i \le n$.
In the symbolic case where $\pi$ is 1-block factor code on a 1-step SFT $X$, the fiber product $X^n_\pi$ is also a 1-step SFT.

\begin{definition}\label{def:top-deg-join}
  Let $\pi: X \to Y$ be a finite-to-one factor code from an irreducible SFT $X$ and let $d$ be the degree of $\pi$. Additionally, we assume $\pi$ is recoded, in other words, we assume that $\pi$ is a 1-block factor code and $X$ is a 1-step SFT. The \emph{topological degree joining} for the code $\pi$ is the set $\Lambda$ of all $(x^{(1)}, x^{(2)}, \dots, x^{(d)}) \in X^d_\pi$ such that $x^{(1)}, x^{(2)}, \dots, x^{(d)}$ are $d$ distinct mutually separated points.
\end{definition}
It is easy to check that the topological degree joining $\Lambda$ is a 1-block SFT contained in the self fiber product $X^d_\pi$, but it is in general not irreducible.
\begin{theorem}
The $d$ projections of $\Lambda$ are all $X$, that is, $p_i(\Lambda)=X$ for each $1\le i\le d$. In particular, $\Lambda$ is a $d$-fold topological joining of $X$.
The map $\pi_\Lambda: \Lambda \to Y$ defined by $\pi_\Lambda = \pi \circ p_1 = \pi \circ p_2 = \cdots = \pi \circ p_d$ is a 1-block factor code. In particular, $\pi_\Lambda$ is a finite-to-one 1-block factor code from a (not necessarily irreducible) 1-step SFT.
\end{theorem}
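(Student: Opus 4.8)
The plan is to prove the two assertions in turn: first the surjectivity of each coordinate projection (which then gives the topological-joining property for free), and then the structural properties of $\pi_\Lambda$. A useful reduction at the outset is to note that the defining condition of $\Lambda$ in Definition~\ref{def:top-deg-join}---that the $d$ coordinates be distinct mutually separated points---is symmetric under permuting the $d$ coordinates, so $\Lambda$ is invariant under the coordinate-permutation action on $X^d$. Consequently $p_i(\Lambda)=p_1(\Lambda)$ for every $i$, and it suffices to show $p_1(\Lambda)=X$. (I also record that mutual separation already forces the coordinates to be pairwise distinct, so the ``distinct'' clause in the definition is automatic; this makes $\Lambda$ the per-symbol restriction of $X^d_\pi$ obtained by forbidding every symbol with a repeated coordinate.)

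To establish $p_1(\Lambda)=X$ I would first handle doubly transitive points. Let $x\in X$ be doubly transitive and set $y=\pi(x)$. By Theorem~\ref{thm:fto-and-transitive} the image $y$ is doubly transitive in $Y$, so by Theorem~\ref{thm:fto-has-degree} the fiber $\pi^{-1}(y)$ has exactly $d$ points, and by the ``in particular'' clause of Theorem~\ref{thm:fto-mut-sep} these $d$ points are mutually separated. Listing them as $x^{(1)},\dots,x^{(d)}$ with $x^{(1)}=x$ gives a point of $\Lambda$ whose first coordinate is $x$, so every doubly transitive point lies in $p_1(\Lambda)$; in particular $\Lambda\neq\emptyset$. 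Since $X$ is irreducible, its doubly transitive points are dense, while $\Lambda$, being a shift space, is compact, so its continuous image $p_1(\Lambda)$ is closed. A dense subset of $X$ sitting inside the closed set $p_1(\Lambda)$ forces $p_1(\Lambda)=X$, and by the symmetry above $p_i(\Lambda)=X$ for every $i$. As $\Lambda$ is a subsystem of $X^d$ all of whose $d$ projections are onto $X$, it is by definition a $d$-fold topological joining of $X$.

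It then remains to check the statements about $\pi_\Lambda$. Since $\Lambda\subseteq X^d_\pi$, every point of $\Lambda$ satisfies $\pi p_1=\cdots=\pi p_d$, so $\pi_\Lambda$ is well defined; it is continuous and shift-commuting as a composition of continuous shift-commuting maps, and it is onto because $\pi_\Lambda(\Lambda)=\pi(p_1(\Lambda))=\pi(X)=Y$. Because $\pi$ is $1$-block and $p_1$ reads a single coordinate, $\pi_\Lambda=\pi\circ p_1$ is a $1$-block map, so $\pi_\Lambda$ is a $1$-block factor code. Finally, $\Lambda$ is a $1$-step SFT, being cut out of the $1$-step SFT $X^d_\pi$ by the symbol rule forbidding a repeated coordinate, and for each $y\in Y$ the fiber $\pi_\Lambda^{-1}(y)$ embeds into $\pi^{-1}(y)\times\cdots\times\pi^{-1}(y)$, which is finite since $\pi$ is finite-to-one (Theorem~\ref{thm:fto-conditions}); hence $\pi_\Lambda$ is finite-to-one.

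The only step carrying real content is the surjectivity $p_1(\Lambda)=X$, which is exactly where the mutual-separation theorem (Theorem~\ref{thm:fto-mut-sep}) enters, combined with density of doubly transitive points and compactness of $\Lambda$. The remaining items---well-definedness, the $1$-block property, surjectivity of $\pi_\Lambda$, and finiteness of its fibers---are routine; notably, none of them invokes irreducibility of $\Lambda$, which is fortunate since $\Lambda$ need not be irreducible.
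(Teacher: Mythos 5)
Your proof is correct, but it reaches the one nontrivial conclusion, $p_i(\Lambda)=X$, by a genuinely different route than the paper. The paper first deduces surjectivity of $\pi_\Lambda$ from Theorem~\ref{thm:fto-mut-sep} in its \emph{general} form (every fiber $\pi^{-1}(y)$, doubly transitive or not, contains at least $d$ mutually separated points), and then proves $p_i(\Lambda)=X$ by an entropy argument: setting $X_0:=p_i(\Lambda)$, surjectivity of $\pi_\Lambda$ gives $\pi(X_0)=Y$, hence $h(X_0)\ge h(Y)=h(X)$, and entropy minimality of the irreducible SFT $X$ forces $X_0=X$ (the same device as in Lemma~\ref{lem:fto-full}). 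You instead prove $p_1(\Lambda)=X$ directly: every doubly transitive $x\in X$ occurs as the first coordinate of a $\Lambda$-tuple, because its image is doubly transitive (Theorem~\ref{thm:fto-and-transitive}), the fiber over it has exactly $d$ points (Theorem~\ref{thm:fto-has-degree}), and those are mutually separated by the ``in particular'' clause of Theorem~\ref{thm:fto-mut-sep}; density of doubly transitive points plus closedness of the compact image $p_1(\Lambda)$ then finishes, and surjectivity of $\pi_\Lambda$ falls out afterwards as $\pi(p_1(\Lambda))=\pi(X)=Y$, reversing the paper's order of deductions. What each approach buys: yours is more elementary (no entropy at all) and only needs the separation theorem at doubly transitive fibers, but it leans on the density of doubly transitive points in an irreducible sofic shift, a standard fact you should nonetheless justify --- either by a Baire category argument or by applying Lemma~\ref{lem:dt-full-in-full} to any fully supported ergodic measure on $X$ and noting that a set of full measure for such a measure is dense; the paper's route avoids that fact but needs the full-strength separation theorem at arbitrary fibers together with entropy minimality. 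Your permutation-symmetry reduction is a nice economy absent from the paper, and your remaining verifications (well-definedness, the $1$-block property, the $1$-step SFT structure of $\Lambda$, and finiteness of fibers via the embedding $\pi_\Lambda^{-1}(y)\subset\pi^{-1}(y)\times\cdots\times\pi^{-1}(y)$) match the paper's, which phrases the last point as $\pi_\Lambda$ being a restriction of the finite-to-one map $X^d_\pi\to Y$.
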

\begin{proof}
Theorem~\ref{thm:fto-mut-sep} implies that the map $\pi_\Lambda$ is onto. It is a 1-block factor code because $\pi$ and $p_i$ are. It is finite-to-one because it is a restriction of the finite-to-one map $X^d_\pi \to Y$.

In order to show $p_i(\Lambda)=X$, fix $i$ and let $X_0 := p_i(\Lambda) \subset X$. Since $\pi_\Lambda = \pi \circ p_i |_\Lambda$ is onto, it follows that the subsystem $X_0$ projects onto $Y$ under $\pi$. Now we use the same argument as in the proof of Lemma~\ref{lem:fto-full}.
We have $h(X_0) \ge h(\pi(X_0)) = h(Y) = h(X)$. By entropy minimality of $X$, this implies $X_0 = X$, in other words, $p_i(\Lambda)=X$.
\end{proof}

We can now show that $\Lambda$ is a space hosting all degree joinings over all possible $\nu$ with full support.

\begin{theorem}\label{thm:for-sft}
Under the assumptions from Definition~\ref{def:top-deg-join}, for each fully supported $\nu\in E(Y)$, the quadruple $(X,Y,\pi, \nu)$ is a factor quadruple with degree $d$. In this case, the set of all degree joinings over $\nu$ w.r.t. $\pi$ is
\[\{\lambda\in E(\Lambda): \pi_\Lambda \lambda = \nu\}.\] 
\end{theorem}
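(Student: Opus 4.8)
The plan is to dispose of the degree claim first and then prove the asserted set equality by two inclusions, writing $D := \{\lambda \in E(\Lambda): \pi_\Lambda \lambda = \nu\}$ for the right-hand side. For the degree: since $\nu$ is fully supported, Lemma~\ref{lem:dt-full-in-full} gives that $\nu$-a.e.\ $y \in Y$ is doubly transitive, and Theorem~\ref{thm:fto-has-degree} says each doubly transitive point has exactly $d = d_\pi$ preimages. Hence $d_{\pi,\nu} = d$, so $(X,Y,\pi,\nu)$ is a factor quadruple of degree $d$ and it makes sense to talk about degree joinings over $\nu$.

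The inclusion $D \subseteq \{\text{degree joinings}\}$ is the routine one. Given $\lambda \in E(\Lambda)$ with $\pi_\Lambda \lambda = \nu$, I would simply read off the three defining properties of a degree joining from the construction of $\Lambda$. Since $\Lambda \subseteq X^d_\pi$, the measure $\lambda$ is a $d$-fold relative joining; since every tuple in $\Lambda$ consists of $d$ distinct points, $\lambda$ is separating; and because $\pi_\Lambda = \pi \circ p_i$ for every $i$, we get $\pi p_i \lambda = \pi_\Lambda \lambda = \nu$ for all $i$. Together with the assumed ergodicity, this exhibits $\lambda$ as a $d$-fold ergodic separating relative joining over $\nu$.

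The reverse inclusion is the substantive one. Let $\lambda$ be a degree joining over $\nu$; it is ergodic and satisfies $\pi p_i \lambda = \nu$, so the only thing to establish is $\lambda(\Lambda) = 1$, after which $\pi_\Lambda \lambda = \pi p_1 \lambda = \nu$ is immediate and $\lambda \in D$ follows. The key step combines separation with full support. For $\lambda$-a.e.\ $(x^{(1)}, \dots, x^{(d)})$ the common image $y := \pi(x^{(1)})$ is doubly transitive, because $\pi p_1 \lambda = \nu$ is fully supported and Lemma~\ref{lem:dt-full-in-full} applies; hence $|\pi^{-1}(y)| = d$ by Theorem~\ref{thm:fto-has-degree}. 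Since $\lambda$ is separating, the $d$ distinct coordinates $x^{(1)}, \dots, x^{(d)}$ must exhaust this size-$d$ fiber, and then Theorem~\ref{thm:fto-mut-sep} guarantees that a fiber of size exactly $d$ consists of mutually separated points. Therefore the tuple lies in $\Lambda$, giving $\lambda(\Lambda) = 1$.

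The main obstacle is precisely this upgrade from \emph{separating} (distinct coordinates) to \emph{mutually separated} (never sharing a symbol at a common position), and it is exactly where full support of $\nu$ is indispensable: full support forces doubly transitive images $\nu$-a.e., which pins the fiber size at exactly $d$ and thereby triggers the mutual-separation conclusion of Theorem~\ref{thm:fto-mut-sep}. A minor bookkeeping point is to confirm that the sets involved (the doubly transitive set in $Y$, the separating set, and $\Lambda$ itself) are Borel, so that the almost-everywhere transfer between $X^d$ and $Y$ via the measure-preserving map $\pi p_1$ is legitimate; this is routine, since $\Lambda$ is an SFT and hence closed, and the doubly transitive set is Borel.
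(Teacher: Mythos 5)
Your proposal is correct and follows essentially the same route as the paper's own proof: the easy inclusion reads the defining properties of a degree joining directly off the construction of $\Lambda$, and the converse uses full support of $\nu$ together with Lemma~\ref{lem:dt-full-in-full} to get doubly transitive images $\lambda$-a.e., so that Theorem~\ref{thm:fto-has-degree} pins the fiber size at $d$ and Theorem~\ref{thm:fto-mut-sep} upgrades the $d$ distinct coordinates to mutually separated ones, giving $\lambda(\Lambda)=1$. Your explicit verification of the degree claim and the measurability bookkeeping are minor additions the paper leaves implicit, but the substance is identical.
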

\begin{proof}
Since $\Lambda \subset X^d_\pi$, each invariant measure $\lambda$ on $\Lambda$ is a $d$-fold $\pi$-relative joining. Such $\lambda$ is separating because mutually separated points are distinct. Therefore, each member of the set $\{\lambda\in E(\Lambda): \pi_\Lambda \lambda = \nu\}$ is a degree joining over $\nu$.

Conversely, suppose $\lambda$ is a degree joining over $\nu$.

First we show that $\lambda$-a.e. $(x^{(1)}, \cdots, x^{(d)})$ is mutually separated. By definition, for $\lambda$-a.e. $(x^{(1)}, \cdots, x^{(d)})$, the points $x^{(1)}, \cdots, x^{(d)}$ are the $d$ distinct pre-images of $\pi (x^{(1)})$. Since $\nu$-a.e. $y$ is doubly transitive and $\nu = \pi p_1 \lambda$, we can conclude that the point $\pi (x^{(1)})$ is doubly transitive for $\lambda$-a.e. $(x^{(1)}, \cdots, x^{(d)})$. Therefore its $d$ pre-images are mutually separated by Theorem~\ref{thm:fto-mut-sep}. This shows that $\lambda$-a.e. $(x^{(1)}, \cdots, x^{(d)})$ is indeed mutually separated and so $\lambda(\Lambda)=1$.
Now, it follows easily that $\lambda$ is in $E(\Lambda)$ with $\pi_\Lambda \lambda = \nu$ by definition of $\pi_\Lambda$.
\end{proof}

The theorem above implies that in order to construct all lifts of $\nu$ through the factor code $\pi: X\to Y$, it is enough to lift $\nu$ to an ergodic measure on $\Lambda$ through the new factor code $\pi_\Lambda$ just once and then obtain all lifts in $X$ as margins of the constructed degree joining. This also works as a more constructive proof of existence of a degree joining for the symbolic dynamics case because the non-emptiness of the set $\{\lambda\in E(\Lambda): \pi_\Lambda \lambda = \nu\}$ follows directly from the fact that $\pi_\Lambda$ is a factor map onto $Y$.
We note that the SFT $\Lambda$ is easily computable from the code $\pi: X\to Y$ in the following precise sense. Using the construction of labeled products of labeled graphs (see~\cite{LM}), one can represent the topological degree joining $\Lambda$ together with $\pi_\Lambda$ as a subgraph of the labeled product of $d$ copies of the labeled graph representing $\pi$.

If the factor code $\pi$ is constant-to-one, then we obtain the same result for all $\nu\in E(Y)$ even when $\nu$ is not fully supported:
\begin{theorem}
In addition to the assumptions from Definition~\ref{def:top-deg-join}, also assume that $\pi$ is constant-to-one, i.e., each $y\in Y$ has precisely $d$ pre-images. Then for each $\nu\in E(Y)$, the quadruple $(X,Y,\pi, \nu)$ is a factor quadruple with degree $d$. And the set of all degree joinings over $\nu$ w.r.t. $\pi$ is
\[\{\lambda\in E(\Lambda): \pi_\Lambda \lambda = \nu\}.\] 
\end{theorem}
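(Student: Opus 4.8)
The plan is to mimic the proof of Theorem~\ref{thm:for-sft} almost verbatim, replacing the single step that used full support of $\nu$ with a direct appeal to the constant-to-one hypothesis. First I would dispose of the claim that $(X, Y, \pi, \nu)$ is a factor quadruple with degree $d$: since $\pi$ is constant-to-one, every fiber $\pi^{-1}(y)$ has exactly $d$ points, so $d_{\pi,\nu} = d$ for every $\nu \in E(Y)$ directly from the definition of the relative degree, with no measure-theoretic genericity argument needed.

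For the inclusion of $\{\lambda \in E(\Lambda): \pi_\Lambda \lambda = \nu\}$ into the set of degree joinings over $\nu$, I would reuse the forward direction of Theorem~\ref{thm:for-sft} unchanged: because $\Lambda \subset X^d_\pi$, every invariant measure on $\Lambda$ is a $d$-fold $\pi$-relative joining, and since the points of $\Lambda$ are mutually separated (hence distinct) by construction, such a joining is separating. Thus each ergodic $\lambda$ on $\Lambda$ with common image $\nu$ is a degree joining over $\nu$. This half never invoked full support in the first place, so it transfers without modification.

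The converse is where the argument differs, and it is in fact simpler than before. Given a degree joining $\lambda$ over $\nu$, I would show $\lambda(\Lambda) = 1$. For $\lambda$-a.e.\ $(x^{(1)}, \dots, x^{(d)})$, the $d$ coordinates are the $d$ distinct pre-images of $y := \pi(x^{(1)})$. In the fully supported case one argued that $\nu$-a.e.\ $y$ is doubly transitive (via Lemma~\ref{lem:dt-full-in-full}) in order to invoke Theorem~\ref{thm:fto-mut-sep}; here that detour is unnecessary. Because $\pi$ is constant-to-one, $|\pi^{-1}(y)| = d$ for \emph{every} $y \in Y$, and Theorem~\ref{thm:fto-mut-sep} then guarantees that the $d$ pre-images of \emph{any} such $y$ are mutually separated. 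Hence $\lambda$-a.e.\ point of $X^d$ is mutually separated and therefore lies in $\Lambda$, giving $\lambda(\Lambda) = 1$; it follows that $\lambda \in E(\Lambda)$ with $\pi_\Lambda \lambda = \nu$.

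The only real content is recognizing that the constant-to-one hypothesis makes the ``$|\pi^{-1}(y)| = d$'' clause of Theorem~\ref{thm:fto-mut-sep} hold at every fiber, so the genericity step of the previous proof can be skipped entirely. I expect no genuine obstacle: the proof is a strict simplification of Theorem~\ref{thm:for-sft}, with the constant-to-one condition playing exactly the role that double transitivity (supplied by full support) played there.
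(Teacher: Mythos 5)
Your proposal is correct and is essentially the paper's own argument: the paper's proof likewise observes that the constant-to-one clause of Theorem~\ref{thm:fto-mut-sep} makes all $d$ pre-images of an \emph{arbitrary} $y\in Y$ mutually separated, and then reruns the proof of Theorem~\ref{thm:for-sft} with that step substituted for the double-transitivity/full-support argument. Your write-up just makes explicit what the paper compresses into ``the rest of the proof is similar.''
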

\begin{proof}
  The same argument as in the proof of the previous theorem shows that the $d$ pre-images of an arbitrary $y\in Y$ is mutually separated. The rest of the proof is similar.
\end{proof}

\begin{remark}
  We remark that a factor code between two irreducible SFTs is constant-to-one if and only if it is bi-closing. Within the class of surjective cellular automata as a special case of factor codes, constant-to-one cellular automata are precisely what is called open cellular automata \cite{kurka2003topological}.
\end{remark}

Next we show that degree joinings for sofic shifts can be obtained from degree joinings for SFTs which are almost one-to-one covers of the original sofic shifts. Recall that each irreducible sofic shift $X$ has an extension $\pi_R: X_R \to X$ where $X_R$ is an irreducible SFT and $\pi_R$ is a factor code that is almost invertible (in this case, equivalent to having degree one). Minimal right-resolving presentations are a special case (see~\cite{LM}).

\begin{theorem}
Let $\pi: X\to Y$ be a finite-to-one factor code on an irreducible sofic shift $X$ with degree $d$. Let $\nu \in E(Y)$ be fully supported. Fix a $\pi_R: X_R\to X$ such that $X_R$ is an irreducible SFT and $\pi_R$ is an almost invertible factor code. Then
\begin{enumerate}
\item $(X, Y, \pi, \nu)$ and $(X_R, Y, \pi\circ\pi_R, \nu)$ are factor quadruples with same degree $d$.
\item For each degree joining $\lambda_R$ for $(X_R, Y, \pi\circ\pi_R, \nu)$, its projection to $X^d$ is a degree joining for $(X, Y, \pi, \nu)$. In other words, if we set $\lambda := (\pi_R)^{\otimes d}(\lambda_R)$, then $\lambda$ is a degree joining for $(X, Y, \pi, \nu)$.
\end{enumerate}
\end{theorem}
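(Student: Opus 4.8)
The plan is to dispatch the two assertions in order, with essentially all of the work lying in the separating property in part (2); part (1) and the remaining requirements of (2) reduce to bookkeeping about fibers over doubly transitive points.

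For part (1), I would observe that $(X,Y,\pi,\nu)$ is a factor quadruple of degree $d$ at once: $\pi$ has classical degree $d$ by Theorem~\ref{thm:fto-has-degree}, and since $\nu$ is fully supported, $\nu$-a.e.\ $y\in Y$ is doubly transitive by Lemma~\ref{lem:dt-full-in-full}, so $d_{\pi,\nu}=d$. For the second quadruple, note that $\pi\circ\pi_R\colon X_R\to Y$ is a finite-to-one factor code from an irreducible SFT onto $Y$, so it has a classical degree. To compute it, fix a doubly transitive $y\in Y$; it has $d$ distinct preimages $a_1,\dots,a_d$ under $\pi$, each doubly transitive in $X$ by Theorem~\ref{thm:fto-and-transitive}. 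Because $\pi_R$ is almost invertible (degree one), each $a_i$ has a single $\pi_R$-preimage $b_i$, and these $d$ points are exactly $(\pi\circ\pi_R)^{-1}(y)$. Hence $\pi\circ\pi_R$ has degree $d$, and full support of $\nu$ gives $d_{\pi\circ\pi_R,\nu}=d$ as well.

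For part (2), write $\lambda := (\pi_R)^{\otimes d}(\lambda_R)$. Ergodicity of $\lambda$ is immediate, since $\lambda_R$ is ergodic and $(\pi_R)^{\otimes d}$ is a shift-commuting factor map. Using that coordinate projection commutes with the product map, i.e.\ $p_i\circ(\pi_R)^{\otimes d}=\pi_R\circ p_i$, I would compute $\pi p_i\lambda=(\pi\circ\pi_R)(p_i\lambda_R)=\nu$, since $\lambda_R$ is a relative joining over $\nu$ with respect to $\pi\circ\pi_R$; the same commutation sends a $\lambda_R$-generic tuple of $(X_R)^d_{\pi\circ\pi_R}$ to a tuple of $X^d_\pi$, so $\lambda(X^d_\pi)=1$ and $\lambda$ is a $d$-fold $\pi$-relative joining over $\nu$.

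The hard part will be separation. Here I would merge the fiber picture of part (1) with the fact that $\lambda_R$ is both separating and supported on the fiber product: exactly as in the proof of Theorem~\ref{thm:degree-joining-universal}, for $\lambda_R$-a.e.\ $(x_1,\dots,x_d)$ the coordinates are the $d$ distinct preimages of their common image $y=(\pi\circ\pi_R)(x_1)$, and for $\nu$-a.e.\ such $y$ (hence $\lambda_R$-a.e.\ tuple, since $\nu=(\pi\circ\pi_R)p_1\lambda_R$) this $y$ is doubly transitive. For such $y$, part (1) shows that $\pi_R$ restricts to a \emph{bijection} from $(\pi\circ\pi_R)^{-1}(y)=\{b_1,\dots,b_d\}$ onto $\pi^{-1}(y)=\{a_1,\dots,a_d\}$, because each $a_i$ is doubly transitive and $\pi_R$ is one-to-one over such points. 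Consequently the images $\pi_R x_1,\dots,\pi_R x_d$ are $d$ distinct points of $X$. Thus the $\lambda_R$-full-measure event just described is contained in the $(\pi_R)^{\otimes d}$-preimage of the separating subset of $X^d$, which means exactly that this separating subset has $\lambda$-measure one. Therefore $\lambda$ is a $d$-fold ergodic separating $\pi$-relative joining over $\nu$, i.e.\ a degree joining for $(X,Y,\pi,\nu)$. The only delicate bookkeeping is the measurability of the various fiber-defined sets, which is routine and guaranteed in the form needed by Lemma~\ref{lem:fiber-is-measurable}.
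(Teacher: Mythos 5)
Your proof is correct and follows essentially the same route as the paper: degrees via doubly transitive points and full support of $\nu$, the ergodicity, fiber-product, and marginal properties of $\lambda$ via the commutation $p_i\circ(\pi_R)^{\otimes d}=\pi_R\circ p_i$, and separation from the fact that the degree-one code $\pi_R$ is injective over doubly transitive points. The only cosmetic difference is that you compute the degree of $\pi\circ\pi_R$ directly by counting fibers over a doubly transitive point, whereas the paper cites multiplicativity of degree under composition; both reduce to the same fiber count.
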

\begin{proof}
For each doubly transitive $y\in Y$, by Theorem~\ref{thm:fto-has-degree}, the point $y$ has exactly $d$ pre-images in $X$.
But since $\nu$ is fully supported, by Lemma~\ref{lem:dt-full-in-full}, such $y$ form a full measure set in $Y$ w.r.t. $\nu$. So it follows that $(X, Y, \pi, \nu)$ is a factor quadruple with degree $d$.

The degree of the composition $\pi\circ\pi_R$ is the product of the degree of $\pi$ and that of $\pi_R$ but the degree of $\pi_R$ is one. Therefore $\pi\circ\pi_R$ has degree $d$.
It follows that $(X_R, Y, \pi\circ\pi_R, \nu)$ has the same degree $d$ by a similar argument using doubly transitive points in $Y$.

Next we show that $\lambda$ is a degree joining for the quadruple $(X, Y, \pi, \nu)$.
The measure $\lambda$ is an ergodic measure on $X^d$ because it is an image of an ergodic measure on $(X_R)^d$ under the shift-commuting map $(\pi_R)^{\otimes d}: (X_R)^d \to X^d$.

The map $(\pi_R)^{\otimes d}$ maps the $d$-fold fiber product for $\pi\circ\pi_R$ into a subset of the $d$-fold fiber product for $\pi$. Therefore $\lambda(X_\pi^d)=1$ follows.

Next, we show that $\pi p_1 \lambda = \nu$. With abuse of notation, we write $p_1$ for both the projection $X^d \to X$ to the first component and the projection $(X_R)^d\to X_R$. First, we have $\pi_R \circ p_1 = p_1 \circ (\pi_R)^{\otimes d}$ and this implies that $\pi_R p_1 \lambda_R = p_1\lambda$. Therefore, $\pi (p_1 \lambda) = \pi (\pi_R p_1 \lambda_R) = (\pi \circ \pi_R)(p_1 \lambda_R)$, but this is just $\nu$ because $\lambda_R$ is a degree joining w.r.t. $\pi\circ\pi_R$ over $\nu$. We just showed $\pi p_1 \lambda = \nu$.

To summarize, we showed that $\lambda$ is an ergodic $d$-fold relative joining for the quadruple $(X, Y, \pi, \nu)$ and now we only need to show that it is a separating joining.

To see that $\lambda$ is separating, first notice that $\nu$-a.e. $y$ is doubly transitive, because $\nu$ has full support. Hence, for $\lambda_R$-a.e. $(x^{(1)}, x^{(2)}, \dots, x^{(d)}) \in (X_R)^d$, each $x^{(i)}$ is doubly transitive by Theorem~\ref{thm:fto-and-transitive}, but $\pi_R$ must be injective on doubly transitive points because $\pi_R$ has degree 1. Therefore images in $X$ of $x^{(1)}, x^{(2)}, \dots, x^{(d)}$ under $\pi_R$ are $d$ distinct points, for $\lambda_R$-a.e. $(x^{(1)}, x^{(2)}, \dots, x^{(d)}) \in (X_R)^d$. Since $\lambda$ is defined to be the image of $\lambda_R$ under $(\pi_R)^{\otimes d}$, this shows that $\lambda$ is separating.
\end{proof}

The following two examples show some pathologies when $\nu$ is not fully supported and $\pi$ is not constant-to-one.

\begin{example}\label{ex:diff}
  Let $Y$ be a mixing SFT with some fixed point $y \in Y$ so that $\sigma(y)=y$. By using the blowing-up lemma (Lemma 10.3.2 in \cite{LM}), there exist a mixing SFT $X$ and a finite-to-one factor code $\pi:X \to Y$ such that $\pi^{-1}(y)$ consists of one periodic orbit of least period 2, and every periodic point that is not $y$ has exactly one pre-image under $\pi$.
  Since every periodic point of sufficiently large least period has a unique pre-image, the factor code $\pi$ has degree one. On the other hand, $\nu := \delta_y$ is an ergodic measure that is not fully supported and $d_\nu = 2$. In particular $d_\nu$ exceeds the degree of the factor code.
  The number of ergodic lifts of $\nu$ in this case does not exceed the degree of $\pi$ because the unique invariant measure supported on the periodic orbit of period 2 that maps to $y$ is the unique lift of $\nu$.
\end{example}

\begin{example}
  \label{ex:exceed}
  Let $X$ be a mixing SFT with at least two distinct fixed points $x, x' \in X$. Let $Y=X$ and $y=x$.
  By using Ashley's extension theorem (Theorem 3.15 in \cite{Ashley-Resolving}), there exists a degree one factor code $\pi: X \to Y$ such that $\pi(x) = \pi(x') = y$. The measure $\nu := \delta_y \in E(Y)$ is not fully supported and has at least two different ergodic lifts, namely $\delta_x$ and $\delta_{x'}$. In particular, the number of ergodic lifts of $\nu$ exceeds the degree of $\pi$.
\end{example}

The following example generalizes the above example in order to make the measure $\nu$ less trivial.
\begin{example}
  Let $M, M'$ be irreducible SFTs conjugate to each other and $M\cap M' = \emptyset$. Let $X$ be another irreducible SFT such that $M \cup M' \subset X$. On the image side, let $N=M$ and $Y=X$ so that $N$ is a proper subsystem of $Y$.
  By using Ashley's stronger extension theorem in \cite{Ashley-Extension}, we can extend the obvious two-to-one map $M \cup M' \to N$ to a degree one factor code $\pi: X \to Y$.
  Let $\nu$ be any ergodic measure on $N$. Then $\nu$, seen as an element in $E(Y)$, is not fully supported because $N$ is a proper closed subset of $Y$. The measure $\nu$
  has at least two distinct ergodic lifts $\mu, \mu'$ which are copies of $\nu$ on $M, M'$ respectively.
  In particular, the number of ergodic lifts of $\nu$ exceeds the degree of $\pi$.
\end{example}

We remark that an interesting direction for further research may be to specialize to the problem of lifting finitely described ergodic measures. As a first step in this direction, we raise the following question.
\begin{question}
  Let $\pi: X \to Y$ be a finite-to-one factor code on a mixing SFT. Let $\nu \in E(Y)$ be a hidden Markov measure. Is there an algorithm to decide the number of ergodic lifts of $\nu$? A closely related question is the following. Is there an algorithm to decide which of the margins of a degree joining are equal to which other margins? 
\end{question}

\bibliographystyle{amsplain}
\providecommand{\bysame}{\leavevmode\hbox to3em{\hrulefill}\thinspace}
\providecommand{\MR}{\relax\ifhmode\unskip\space\fi MR }
\providecommand{\MRhref}[2]{%
  \href{http://www.ams.org/mathscinet-getitem?mr=#1}{#2}
}
\providecommand{\href}[2]{#2}

\end{document}